\newtheorem{definition}{Definition}[section]
\newtheorem{theorem}[definition]{Theorem}
\newtheorem{lemma}[definition]{Lemma}
\newtheorem{remark}[definition]{Remark}
\newtheorem{prop}[definition]{Proposition}
\renewenvironment{proof}[1][\noindent Proof]{{\par\pushQED{\qed}\itshape #1\@. }}{\popQED}
\newcommand{\cS}{\mathcal{S}}
\newcommand{\F}{\mathbb{F}}
\newcommand{\V}{\mathbb{V}}
\newcommand\spanset[1]{\ensuremath \langle #1 \rangle}
\title{On sets of subspaces with two intersection dimensions and a geometrical junta bound}
\author{Giovanni Longobardi\footnote{Corresponding Author} , Leo Storme and Rocco Trombetti}
\date{}
\begin{document}

\maketitle

\begin{abstract} In this article, constant dimension subspace codes whose codewords have subspace distance in a prescribed set of integers, are considered. The easiest example of such an object is a {\it junta} \cite{junta}; i.e. a subspace code in which all codewords go through a common subspace.

We focus on the case when only two intersection values for the codewords, are assigned. In such a case we determine an upper bound for the dimension of the vector space spanned by the elements of a non-junta code. In addition, if the two intersection values are consecutive, we prove that such a bound is tight, and classify the examples attaining the largest possible dimension as one of four infinite families.  

\end{abstract}

\section{Introduction and preliminaries}

Let $\V=V(\F)$ be a finite dimensional vector space over a (possibly finite) field $\F$, and let $k \in \mathbb{Z}^+$ be a positive integer and $\ell \in \mathbb{N}$, such that $\ell < k$. A $(k;\ell)$-{\it SCID} (Subspaces with Constant Intersection Dimension) in $\V$, is a set of $k$-dimensional subspaces of $\V$ ($k$-spaces in the following) pairwise intersecting in an $\ell$-dimensional space \cite{barrolleta-storme}. The easiest way of constructing such an object is by considering a so-called $\ell$-$sunflower$. Precisely, by taking a family $\mathcal{S}$ of $k$-spaces of $\V$ containing an $\ell$-space $V'$ and having no points in common outside of $\V'$. In the following, we will refer to $V'$ as the {\it center} of the sunflower, and will call the elements of $\cS$, the {\it petals} of $\cS$.

Of course, not all $\ell$-sunflowers with the same number of petals span a subspace of the same dimension in $\mathbb{V}$. A sunflower $\cS$ is said to be of \textit{maximal dimension} if among all sunflowers with the same number of petals, it spans a subspace of $\mathbb{V}$ of largest dimension. 

In this article we focus on a natural generalization of the concept of $(k;\ell)$-SCID. More precisely, let $\ell_1,\ell_2,...,\ell_v \in \mathbb{N}$ be non-negative integers such that $\ell_1,\ell_2, ..., \ell_v < k$. We give the following definition.

\begin{definition}\label{definitionfamily}
A set $\mathcal{S}$ of $k$-spaces of $\V$ is a $(k;\ell_1,\ell_2,...,\ell_v)$-SPID (Subspaces with Pre-assigned Intersection Dimensions) if for each pair of distinct subspaces $\pi_i,\pi_j\in \cS$, we have $ \dim(\pi_i \cap \pi_j) \in \{\ell_1,\ell_2,...,\ell_v\}$, and for each integer $\ell_m \in \{\ell_1,\ell_2,...,\ell_v\},$ there exist at least two $k$-spaces in $\mathcal{S}$ such that $ \dim(\pi_i \cap \pi_j)=\ell_m$.
\end{definition} 

\noindent For our purposes, we always suppose that the dimension of $\V$ to be large enough in order to assure the existence of such an object. Clearly, in the case when $v=1$, we get back the definition of a $(k;\ell)$-SCID in $\V$. 

The notion of $\ell$-sunflower in $\V$ can also be naturally generalized. We say that a $(k;\ell_1,\ell_2,...,\ell_v)$-SPID $\mathcal{S}$ is an {\it $\ell$-junta} in $\V$, if all elements of $\mathcal{S}$ pass through a common $\ell$-space of $\V$.

These geometric objects arise from a more general problem stated in \cite{beutelspacher,deboeck, eisfeld}, and recently gained a particular interest due to the fact that they provide {\it constant subspace codes}, which are a main tool in random network coding \cite{Etion,KK,subspacecodes}.

\medskip
\noindent In this paper, we elaborate on finite $(k;\ell_1,\ell_2,...,\ell_v)$-SPIDs, mainly focusing on the case when only two intersection values are assigned. Moreover, if $\tilde{\ell} = \min\{\ell_1,\ell_2,...,\ell_v\}$, we will assume that $\dim(\V) \geq |\mathcal{S}|(k-\tilde{\ell})+\tilde{\ell}$. Under this hypothesis, it is easy to see that $\cS$ is an $\tilde{\ell}$-sunflower of $\mathbb{V}$ of maximal dimension if any element $\pi \in \mathcal{S}$ meets the subspace generated by all others precisely in the center. In this case, we determine an upper bound for the dimension of the vector space spanned by the elements of a non-junta code, providing the smallest intersection value is strictly larger than zero. In addition, if these two possible intersection values are consecutive integers, we prove that this bound is tight and classify the examples attaining the largest dimension as one of four infinite families.  

\medskip

\noindent Let $\mathcal{S}=\{\pi_{1},\pi_{2},\ldots,\pi_{n}\}$ be a  $(k;\ell_1,\ell_2,\ldots,\ell_v)$-SPID. As in \cite{barrolleta-storme}, for each $j \in \{1,...,n\}$, the differences 

\[ \delta_{j}(\cS)=\dim\spanset{\pi_{1},\ldots,\pi_{j}}-\dim\spanset{ \pi_{1},\ldots,\pi_{j-1}} \] 
will be an important arithmetic tool in order to prove our results. We underline here that we consider the span of the empty set as the null subspace; accordingly we put $\delta_1=k$. Clearly, the values $\delta_j(\cS)$ depend on the labeling of the subspaces in $\mathcal{S}$. In the following, we will enclose these integers in an array, say $\delta(\mathcal{S})=(\delta_1(\mathcal{S}),\ldots,\delta_n(\mathcal{S}))$. Regarding this array, we show the following fact which also will play a crucial role. 

\begin{prop}\label{sorting}
Let $k,t_1,t_2,\ldots,t_v \in \mathbb{Z}^+$ be integers such that $k\geq t_1 > t_2 > \cdots >t_v \geq 1$. 
Let $\cS=\{\pi_1,\ldots,\pi_n\}$ be a $(k;k-t_1,k-t_2,\ldots,k-t_v)$-SPID in a vector space $\V$, with $n \geq 3$. Then there exists a permutation $\sigma$ of the indices in the set $I_n=\{1,2,\dots,n\}$ such that
\begin{equation*}
    t _1= \delta_2(\cS_\sigma)\geq \delta_3(\cS_\sigma) \geq \ldots \geq \delta_n(\cS_\sigma),
\end{equation*}
where $\cS_{\sigma}=\{\pi_{\sigma(1)},\ldots,\pi_{\sigma(n)}\}$, and
\begin{equation*}
    \delta_j(\cS_\sigma)=\dim\spanset{ \pi_{\sigma(1)},\ldots,\pi_{\sigma(j)}}-\dim\spanset{ \pi_{\sigma(1)},\ldots,\pi_{\sigma(j-1)}}.
\end{equation*}
\end{prop}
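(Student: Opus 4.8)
The plan is to realize the required ordering by a greedy construction and to bound the increments through the nested chain of partial spans. For a partial labeling write $W_{j-1}=\spanset{\pi_{\sigma(1)},\ldots,\pi_{\sigma(j-1)}}$; the starting point is the elementary identity
\[
\delta_j(\cS_\sigma)=\dim\spanset{\pi_{\sigma(1)},\ldots,\pi_{\sigma(j)}}-\dim W_{j-1}=k-\dim(W_{j-1}\cap\pi_{\sigma(j)}),
\]
which recasts each increment as the codimension in $\pi_{\sigma(j)}$ of its intersection with everything already spanned. Since $W_{j-1}\supseteq\pi_{\sigma(1)}$ for $j\geq 3$, this intersection contains $\pi_{\sigma(1)}\cap\pi_{\sigma(j)}$, whose dimension is at least the smallest admissible value $k-t_1$; hence $\delta_j(\cS_\sigma)\leq t_1$ for every $j\geq 2$ and every labeling. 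This is the uniform role played by $t_1$ in the statement.

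Next I fix the first petal so as to force $\delta_2=t_1$. As $k-t_1$ is the minimum of the prescribed intersection dimensions, Definition~\ref{definitionfamily} guarantees a pair of petals meeting in exactly a $(k-t_1)$-space, and I take $\pi_{\sigma(1)}$ to be one member of such a pair. From the second petal on I then choose greedily: having fixed $\pi_{\sigma(1)},\ldots,\pi_{\sigma(j-1)}$, I let $\pi_{\sigma(j)}$ be a remaining petal \emph{maximizing} $\delta_j(\cS_\sigma)$, equivalently \emph{minimizing} $\dim(W_{j-1}\cap\pi)$ over the unused $\pi$. At the second step this forces $\delta_2=t_1$, because the maximum is attained at the minimal pair through $\pi_{\sigma(1)}$ and cannot exceed $t_1$ by the uniform bound of the previous paragraph.

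The heart of the proof is a monotonicity lemma for these greedy increments. The partial spans form a nested chain $W_{j-1}\subseteq W_j$, so for every still-unused petal $\pi$ one has $\dim(W_{j-1}\cap\pi)\leq\dim(W_j\cap\pi)$; that is, the increment a given petal \emph{would} contribute can only shrink as further petals are absorbed. Because the greedy rule selected at step $j$ the candidate of largest increment against $W_{j-1}$, every petal surviving to step $j+1$ already had increment at most $\delta_j(\cS_\sigma)$ against $W_{j-1}$, hence at most $\delta_j(\cS_\sigma)$ against the larger space $W_j$; maximizing over the survivors gives $\delta_{j+1}(\cS_\sigma)\leq\delta_j(\cS_\sigma)$. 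Running this from $j=2$ yields the full chain $t_1=\delta_2(\cS_\sigma)\geq\delta_3(\cS_\sigma)\geq\cdots\geq\delta_n(\cS_\sigma)$.

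The step I expect to demand the most care is the interface between the two requirements: pinning $\delta_2$ to the specific value $t_1$ while simultaneously keeping the whole tail non-increasing. Anchoring $\pi_{\sigma(1)}$ inside a minimal-intersection pair is precisely what lets a single greedy rule, applied uniformly from the second petal onward, both attain $t_1$ at position two and drive the monotonicity argument with no special case at the start. One should also dispose of the degenerate possibility that a late increment vanishes, i.e. a petal falling inside the current span; this is harmless, since a non-increasing sequence may terminate in zeros and the monotonicity lemma is insensitive to it.
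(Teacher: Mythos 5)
Your proof is correct, but it follows a genuinely different route from the paper's. You anchor $\pi_{\sigma(1)}$ in a pair realizing the minimal intersection dimension $k-t_1$ and then run a \emph{forward} greedy selection, with monotonicity coming from the nesting $W_{j-1}\subseteq W_j$ combined with greedy maximality at step $j$; your verification that $\delta_2(\cS_\sigma)=t_1$ (attained by the anchor's partner, unbeatable because of the uniform bound $\delta_j(\cS_\sigma)\le t_1$) is sound, and the monotonicity step $\delta_{j+1}(\cS_\sigma)\le \delta_j(\cS_\sigma)$ is valid as written. The paper instead begins by placing a largest $(k-t_1)$-sunflower of maximal dimension, with $m\ge 2$ petals, in the first $m$ positions, which forces $\delta_2=\cdots=\delta_m=t_1$, and then orders the remaining petals by a \emph{backward} greedy rule: among the unplaced petals it repeatedly chooses the one whose intersection with the span of all the others is largest, and places it last. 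Both constructions prove the proposition as stated, and yours is arguably more elementary and self-contained, since it never invokes the existence of maximal-dimension sunflowers. What the paper's construction buys is extra structure exploited later: in their ordering the largest index $m$ with $\delta_m=t_1$ equals the number of petals of a largest maximal-dimension $(k-t_1)$-sunflower in $\cS$ (cf.\ Remark \ref{rm:parametersforms}), and this parameter $m$ is precisely what drives the junta bound in Theorem \ref{initial} and the classification in Lemma \ref{3petals}. Your ordering guarantees a non-increasing array starting at $t_1$, but not, by itself, that the initial run of $t_1$'s has this maximal length, since your anchor pair need not belong to a largest sunflower; so while your argument fully establishes Proposition \ref{sorting}, the paper's later "arguing as in the proof of Proposition \ref{sorting}" steps would not go through verbatim with your construction.
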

\begin{proof}
Let $m \in \mathbb{Z}^+$ be the maximum integer for which there exist $m$ $k$-spaces, $\pi_{i_1},\pi_{i_2},\ldots,\pi_{i_m}$, of $\mathcal{S}$, forming a $(k-t_1)$-sunflower of maximal dimension; obviously $m \geq 2$.\\
Consider
\begin{equation*}
    \underset{i \not= i_1,\ldots,i_m}{\max_{1\leq i \leq n }} \dim(\pi_i \cap \spanset{\pi_h \,|\,h\not=i}),
\end{equation*}
and denote by $i_n$ an integer in $\{1,\ldots,n\}\setminus \{i_1,\ldots,i_m\}$ such that
\begin{equation*}
    \dim (\pi_{i_n} \cap \spanset{\pi_h \,|\,h\not=i_n})= \underset{i \not= i_1,\ldots,i_m}{\max_{1\leq i \leq n }} \dim(\pi_i \cap \spanset{\pi_h \,|\,h\not=i}).
\end{equation*}
Similarly, consider
\begin{equation*}
    \underset{i \not= i_1,\ldots,i_m,i_n}{\max_{1\leq i \leq n }} \dim(\pi_i \cap \spanset{\pi_h \,|\,h\not=i,i_n}),
\end{equation*}
then there exists an integer, say $i_{n-1} \in \{1,\ldots,n\}\setminus \{i_1,\ldots,i_m,i_n\}$, such that
\begin{equation*}
    \dim (\pi_{i_{n-1}} \cap \spanset{\pi_h \,|\,h\not=i_{n-1}, i_n})= \underset{i \not= i_1,\ldots,i_m,i_n}{\max_{1\leq i \leq n }} \dim(\pi_i \cap \spanset{\pi_h \,|\,h\not=i,i_n}).
\end{equation*}
After $n-m$ steps, we obtain a sequence of indices $(i_{m+1},\ldots,i_n)$.

Let $\sigma$ be a permutation of the indices $\{1,\ldots,n\}$, fixing the set $\{i_1,i_2,\ldots,i_m\}$ and such that $\sigma(j)=i_j$, for every $j=m+1,\ldots,n$.
Consider $\cS_\sigma= \{ \pi_{\sigma(1)},\ldots,\pi_{\sigma(n)}\}$, we will show that

\[ \delta_{j+1}(\cS_\sigma) \leq \delta_j(\cS_\sigma) \,\,\,\, \textnormal {for all }\,\, j=2,\ldots,n-1.\]
First of all, $\delta_j(\cS_\sigma)\leq t_1$, for each $j=2,\ldots,n$; indeed $$\delta_j(\cS_\sigma)=k-\dim(\pi_{\sigma(j)} \cap \spanset{\pi_{\sigma(1)},\ldots,\pi_{\sigma(j-1)}}) \leq k - \dim(\pi_{\sigma(j)} \cap \pi_{\sigma(1)}) \leq t_1.$$ Also, since $\pi_{\sigma(1)},\ldots,\pi_{\sigma(m)}$ form a $(k-t_1)$-sunflower of maximal dimension, then we have $\delta_j(\cS_\sigma)=t_1$, with $2\leq j \leq m$.\\
Note that 
\begin{equation*}
\dim(\pi_{i_{j+1}} \cap \spanset{ \pi_h \,|\,h\not = i_{j+1},\ldots,i_n})  \geq \dim(\pi_{i_{j}} \cap \spanset{ \pi_h \,|\,h\not = i_{j},\ldots,i_n})
\end{equation*}
for all $m+1 \leq j \leq n-1$, because otherwise we would have 
\begin{equation*}
    \begin{split}
  \dim(\pi_{i_{j+1}} \cap \spanset{ \pi_h \,|\,h\not = i_{j+1},\ldots,i_n}) <  \dim(\pi_{i_{j}} \cap \spanset{ \pi_h \,|\,h\not = i_{j},\ldots,i_n}) & \leq \\
  & \hspace{-4cm} \dim(\pi_{i_j} \cap   \spanset{\pi_h \,|\, h \not=i_j,i_{j+2},\ldots,i_n} ),
    \end{split}
\end{equation*}
which is a contradiction by the definition of $i_{j+1}$. Then
\begin{equation*}
    \delta_{j+1}(\cS_\sigma)=k-\dim(\pi_{i_{j+1}} \cap \spanset{ \pi_h \,|\,h\not = i_{j+1},\ldots,i_n})  \leq k- \dim(\pi_{i_{j}} \cap \spanset{ \pi_h \,|\,h\not = i_{j},\ldots,i_n})=\delta_j(\cS_\sigma).
\end{equation*}
This concludes the proof.
\end{proof}\\

In other terms, it is always possible to sort $k$-spaces in $\mathcal{S}$ in such a way that the associated array $\delta(\mathcal{S})$, is non-increasing (see also \cite[Theorem 2]{barrolleta-storme}). 
 
\begin{remark}\label{rm:parametersforms}
{\rm  We note explicitly that for a $(k;k-t)$-SCID $\cS=\{\pi_1,\ldots,\pi_n\} \subset \V,$ we have  
\begin{equation}\label{sunflowersequence}
    \delta(\mathcal{S}):= (\delta_1,\delta_2,\ldots,\delta_n)=(k,t,\ldots,t)
 \end{equation}
if and only if $\mathcal{S}$ is a $(k-t)$-sunflower of maximal dimension. The necessary condition is in fact trivial. While, regarding the sufficiency we may observe that since $\delta_n(\cS)=t$ and \[\pi_n \cap \spanset{\pi_1,\ldots,\pi_{n-1}} \supseteq \pi_n \cap \pi_i,\] we get for each $i \in \{1,...,n-1\},$ $\pi_n \cap \pi_i= V'$, where $V'=\pi_n \cap \spanset{\pi_1,\ldots,\pi_{n-1}}$. This implies that $\cS$ is a $(k-t)$-sunflower. Finally, by using Grassmann's formula, it is easy to show that $\cS$ is of maximal dimension.}
\end{remark}

\section{A junta-property bound for $(k;k-t_1,k-t_2)$-SPIDs}

In this section, we restrict our discussion to the case where only two values for the intersection dimensions are possible.

We start by showing a result which appears as a quite natural generalization of \cite[Theorem 2]{barrolleta-storme} to  $(k;k-t_1,k-t_2)$-SPID, with $k-t_1 \neq 0$. 

\begin{theorem} \label{initial}
Let $k,t_1,t_2 \in \mathbb{Z}^+$ such that $k> t_1 > t_2 \geq 2$. Let $\mathcal{S}$ be a $(k;k-t_1,k-t_2)$-SPID in $\V$, with $n=| \cS | \geq 3$. If $\dim \spanset{\cS}  \geq k+(t_1-1)(n-1)+2$, then $\cS$ is a $(k-t_1)$-junta.
\end{theorem}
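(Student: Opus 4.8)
The plan is to read everything off the sorted $\delta$-array furnished by Proposition \ref{sorting} and to convert the dimension hypothesis into a bound on the total ``deficit''. After replacing $\cS$ by $\cS_\sigma$, write $\delta_j=\delta_j(\cS_\sigma)$, so that $\delta_2=t_1\geq\delta_3\geq\cdots\geq\delta_n$ and $\dim\spanset{\cS}=k+\sum_{j=2}^n\delta_j$. Since each $\delta_j\leq t_1$, the hypothesis $\dim\spanset{\cS}\geq k+(t_1-1)(n-1)+2$ is equivalent to $\sum_{j=2}^n(t_1-\delta_j)\leq n-3$. I would then argue by contradiction: assuming that $\cS$ is \emph{not} a $(k-t_1)$-junta, I will force this deficit to be at least $n-2$.

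First I would pin down the maximal $(k-t_1)$-sunflower $\pi_{\sigma(1)},\dots,\pi_{\sigma(m)}$ from the sorting construction, with center $C$ of dimension $k-t_1$; by Remark \ref{rm:parametersforms} one has $\delta_2=\cdots=\delta_m=t_1$, and a short deficit count (if $\delta_3<t_1$ then, by monotonicity, $t_1-\delta_j\geq 1$ for all $j\geq 3$, giving deficit $\geq n-2$) already yields $m\geq 3$. For every later index $j>m$ I claim $\delta_j\leq t_1-1$. Indeed, writing $W_j=\pi_{\sigma(j)}\cap\spanset{\pi_{\sigma(1)},\dots,\pi_{\sigma(j-1)}}$, the equality $\delta_j=t_1$ would force $\dim W_j=k-t_1$ and hence $\pi_{\sigma(j)}\cap\pi_{\sigma(i)}=W_j$ for every $i<j$; taking two sunflower indices shows $W_j\subseteq C$, so $W_j=C$ and $\pi_{\sigma(j)}$ would enlarge the sunflower, contradicting the maximality of $m$. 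Thus each of the $n-m$ indices $j>m$ already contributes deficit at least $1$.

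The crux is to show that an index $j>m$ with $C\not\subseteq\pi_{\sigma(j)}$ contributes a much larger deficit. Passing to the quotient $\V/C$, the sunflower images $A_i=\overline{\pi_{\sigma(i)}}$ ($i\leq m$) are independent $t_1$-spaces. Setting $p=\dim(\pi_{\sigma(j)}\cap C)<k-t_1$, each $\overline{\pi_{\sigma(j)}\cap\pi_{\sigma(i)}}$ lies in $A_i$ and has dimension at least $(k-t_1)-p$, so by independence of the $A_i$ one gets $\dim W_j\geq p+m\bigl((k-t_1)-p\bigr)$, whence $\delta_j\leq t_1-(m-1)\bigl((k-t_1)-p\bigr)\leq t_1-(m-1)$. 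Since $m\geq 3$, such a subspace carries deficit at least $m-1\geq 2$. This quotient estimate is the heart of the matter and the step I expect to be the main obstacle: the naive bound ``deficit $\geq 1$'' is not enough, and it is precisely the independence of the sunflower directions $A_i$, together with $k-t_1\geq 1$ (the smallest intersection value being positive), that produces a deficit growing with $m$.

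To finish, I would combine the counts. A junta here is necessarily centered at $C$, because any common $(k-t_1)$-space is contained in $\pi_{\sigma(1)}\cap\pi_{\sigma(2)}=C$ and has the same dimension. Hence, if $\cS$ is not a $(k-t_1)$-junta, some index $j>m$ satisfies $C\not\subseteq\pi_{\sigma(j)}$, and summing the contributions gives total deficit at least $(n-m)\cdot 1+(m-2)=n-2$, contradicting $\sum_{j=2}^n(t_1-\delta_j)\leq n-3$. Therefore $\cS$ is a $(k-t_1)$-junta, as claimed.
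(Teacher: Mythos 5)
Your proposal is correct and is essentially the paper's own argument in lightly repackaged form: your quotient estimate $\delta_j\le t_1-(m-1)\varepsilon\le t_1-m+1$ for a petal not containing the sunflower center (via independence of the petals modulo the center) is exactly the paper's bound on $\delta_r$ obtained in the quotient $\spanset{\cS}/(V'\cap\pi_r)$, and your deficit count is just the paper's summation of the non-increasing array rewritten as $\sum_{j\ge 2}(t_1-\delta_j)\le n-3$. The only departures are cosmetic: you quotient by the full center $C$ rather than by $C\cap\pi_{\sigma(j)}$ (arguably a cleaner way to see the independence), and you separately establish $m\ge 3$, which is not actually needed since your final count $(n-m-1)\cdot 1+(m-1)=n-2$ already exceeds $n-3$ for every $m\ge 2$.
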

\begin{proof}
Let $\delta(\mathcal{S})$ be any non-increasing array associated with $\cS$. In particular, arguing as in the proof of Proposition \ref{sorting}, we can choose as first $m$ spaces, $m \geq 2$, those forming a $(k-t_1)$-sunflower of maximal dimension. By Remark \ref{rm:parametersforms}, the integer $m$ is the largest index for which $\delta_m=t_1$. Let $V'$ be the center of the sunflower formed by $\pi_1,\ldots,\pi_m$. Hence, we get $\dim V'=k-t_1$.\\
Assume that $\cS$ is not a $(k-t_1)$-junta, so we can find a subspace $\pi_r \in \cS \setminus \{\pi_1,\ldots,\pi_m\}$ not containing $V'$. 
We denote $k-t_1-\dim(\pi_r \cap V')$ by $\varepsilon$; hence, $\varepsilon \geq 1$. Also, in the quotient vector space $\Pi=\spanset{\cS}/(V'\cap \pi_r)$, we have that $\dim_{\Pi}\pi_r=t_1+\varepsilon$, and that $\dim_{\Pi}(\pi_r \cap \pi_i) \in \{\varepsilon,\varepsilon+t_1-t_2\}$, for each $1 \leq i \leq m$. Also, the subspaces $(\pi_r \cap \pi_i)/(V' \cap \pi_r)$ of $\Pi$, with $1 \leq i \leq m$, are linearly independent.
Hence,
\begin{equation*} 
\begin{split}
\delta_{r} & = \dim \pi_r -\dim(\langle \pi_1, \pi_2, \ldots,\pi_{r-1} \rangle \cap \pi_r)\leq \dim_\Pi \pi_r - \dim_\Pi \spanset{\pi_1 \cap \pi_r, \ldots, \pi_m \cap \pi_r }  \\ 
 & \hspace{1cm}= t_1+\varepsilon - \sum^m_{i=1}  \dim_{\Pi}(\pi_r \cap \pi_i) \leq t_1+\varepsilon - m \cdot \varepsilon \leq t_1-m+1.\end{split}
\end{equation*}
Since $\delta(\cS)=(\delta_1,\ldots, \delta_n)$ is non-increasing, it is easy to see that
\begin{equation} \label{bound}
\begin{split}
\dim \spanset{\mathcal S} & = \sum^n_{i=1} \delta_i= k + \sum^m_{i=2} \delta_i +\sum^{r-1}_{i=m+1}\delta_i+\sum^n_{i=r} \delta_i \\
&  \leq k + (m-1)t_1+(r-m-1)(t_1-1)+(n-r+1)(t_1-m+1) \\
& =k+(n-1)(t_1-1)-(n-r)(m-2)+1 \\
& \leq k+(n-1)(t_1-1)+1,
\end{split}
\end{equation}
which proves the theorem.
\end{proof}

\begin{remark}\label{lower_bound}
\textnormal{We point out here that unlike what happens for SCIDs, in general the bound stated above is not tight. For instance, with the same notation as used in Theorem \ref{initial}; if $t_1 > t_2 \geq 2$ and there exists an integer $s$ such that $r>s>m$ with $\delta_s \leq t_2$, we can slightly improve on the upper bound stated in Theorem \ref{initial}. In fact, if this is the case we can repeat the proof of Theorem \ref{initial}, and by re-writing Inequality (\ref{bound}), we get}
\begin{equation}\label{lower_bound_counting}
\begin{split}
\dim \spanset{\mathcal S} & = \sum^n_{i=1}\delta_i= k + \sum^m_{i=2}\delta_i +\sum^{s-1}_{i=m+1}\delta_i+\sum^{r-1}_{i=s}\delta_i+\sum^n_{i=r}\delta_i \\
&  \leq k + (m-1)t_1+(s-m-1)(t_1-1)+(r-s)t_2+(n-r+1)(t_1-m+1) \\
& =k+(n-1)(t_1-1)-(n-r)(m-2)-(r-s)(t_1-t_2-1)+1 \\
& \leq k+(n-1)(t_1-1)-(t_1-t_2)+2.
\end{split}
\end{equation}

\textnormal{This possibility can be realised if the first $r-1$ spaces form a $(k-t_1)$-junta with $\dim(\pi_s \cap \pi_j)=k-t_2$ for some $j\in\{1,\ldots,s-1\}$. In what follows, we exhibit a concrete example.} 

\textnormal{Let $k,t_1,t_2 \in \mathbb{Z}^+$ such that $k> t_1 > t_2+1 > 2$  and consider $t_1-t_2+1 \leq m \leq \min\{t_1+1,n-1\}$. Let $V',X,N_1,\ldots,N_m$, $M_{m+1},\ldots,M_{s-1}$ and $P_s, \ldots,P_{n-1}$ be linearly independent subspaces of $\V$ such that}
\begin{itemize}
	\item [$a)$] $\dim V'=k-t_1$,
	\item [$b)$] $\dim X=t_1-m+1$,
	\item [$c)$] $\dim N_i=t_1$,   \, \textnormal{for} $i=1,\ldots,m$,
	\item [$d)$] $\dim M_j = t_1-1$,  \, \textnormal{for} $j=m+1,\ldots,s-1$,
	\item [$e)$] $\dim P_\ell=t_2$, \, \textnormal{for} $\ell=s,\ldots,n-1$.
\end{itemize}
\textnormal{Let $A_i=\{a_{i1},\ldots,a_{i,t_1-t_2}\}$ be a set of linearly independent $1$-spaces in $N_i$, for $i=1,\ldots,m$, $|A_i|=t_1-t_2$, and we choose in $A_i$ a 1-space, for example $a_{i1}$. Now, let $b_{m+1},\ldots, b_{s-1}$ be distinct $1$-spaces in $\spanset{a_{11},\ldots,a_{m1}}\setminus\{a_{11},\ldots,a_{m1}\}$ (where $\frac{q^m-1}{q-1}\geq s-m-1$ when $\mathbb{V}$ is a vector space over the Galois field of order $q$, $\F_q$) and let $W$ be a $(k-t_1-1)$-space in $V'$. Then we define the $k$-spaces $\pi_1,\ldots,\pi_n$ as follows.}
\begin{itemize}
	\item [$\circ$] $\pi_1=\spanset{V',N_1}$, $\pi_2=\spanset{V',N_2}$, $\ldots$,  $\pi_m=\spanset{V',N_m}$,
	\item[$\circ$] $\pi_{m+1}=\spanset{V',b_{m+1}, M_{m+1}}$, $\ldots$, $\pi_{s-1}=\spanset{V',b_{s-1},M_{s-1}}$,
	\item [$\circ$] $\pi_s=\spanset{V',Q_s,P_s}$,$\ldots$,$\pi_{n-1}=\spanset{V',Q_{n-1},P_{n-1}}$,
	\item [$\circ$] $\pi_{n}=\spanset{W,a_{11},\ldots,a_{m1},X}$,
	\end{itemize}
\textnormal{where $Q_s,\ldots,Q_{n-1}$ are $(t_1-t_2)$-spaces equal to $\spanset{A_i}$, for some $i \in\{1,\ldots,m\}$.\\
It is easy to verify that
\begin{itemize}
	\item [$i)$] $\pi_i \cap \pi_j =V'$, with $i,j=1,\ldots,s-1,$
	\item[$ii)$] $\pi_i \cap \pi_j=V'$, with $i=m+1,\ldots,s-1$ and $j=s,\ldots,n-1,$
	\item[$iii)$] $\dim(\pi_i \cap \pi_j) \in \{k-t_1,k-t_2\}$,  for $i=1,\ldots,m$ and $j=s,\ldots,n$.
\end{itemize}}

\begin{figure}[h!]
	\centering
	\definecolor{ccqqqq}{rgb}{0.8,0,0}
	\definecolor{qqwuqq}{rgb}{0,0.39215686274509803,0}
	\definecolor{qqqqff}{rgb}{0,0,1}
	\begin{tikzpicture}[line cap=round,line join=round,>=triangle 45,x=1cm,y=1cm,scale=0.52]
	\clip(-15.17,-7.324) rectangle (12.756,8.802);
	\draw [line width=1pt] (-7.29,3.85) circle (2.3654175107156026cm);
	\draw [rotate around={59.62087398863186:(-7.49,3.39)},line width=0.8pt,color=qqqqff] (-7.49,3.39) ellipse (1.3377342264228316cm and 1.1565175573864341cm);
	\draw [rotate around={-71.56505117707799:(5.012,0.176)},line width=0.8pt] (5.012,0.176) ellipse (7.728656901081648cm and 4.442087065179722cm);
	\draw [line width=0.8pt] (3.012,4.924) circle (1.9912126958213174cm);
	\draw [line width=0.8pt] (6.882,-4.684) circle (1.9449791772664304cm);
	\draw [rotate around={-54.34467190209972:(3.153,4.996)},line width=0.8pt,color=qqwuqq] (3.153,4.996) ellipse (1.567439308953724cm and 1.4384230904894868cm);
	\draw [rotate around={-30.32360686254994:(6.893,-4.761)},line width=0.8pt,color=qqwuqq] (6.893,-4.761) ellipse (1.3548597302177934cm and 1.1745104889126488cm);
	\draw [rotate around={-69.2308352052139:(6.244,0.805)},line width=0.8pt,color=ccqqqq] (6.244,0.805) ellipse (6.766777322054547cm and 1.5940044938054767cm);
	\draw [line width=0.8pt] (-10.96,-1.34) circle (1.7157179255343808cm);
	\draw [line width=0.8pt,dash pattern=on 3pt off 3pt on 1pt off 4pt] (-3.026,6.628) circle (1.1269498657881816cm);
	\draw [line width=0.8pt] (-3.062,-1.538) circle (1.8931508127986cm);
	\draw [line width=0.8pt,dash pattern=on 3pt off 3pt on 1pt off 4pt] (-12,6) circle (1.0605375995220532cm);
	\draw [rotate around={18.121860247901363:(-7.231,-5.388)},line width=0.8pt,dash pattern=on 2pt off 2pt] (-7.231,-5.388) ellipse (2.589586245635795cm and 1.748969960744354cm);
	\begin{scriptsize}
	\draw[color=black] (-9.09,6.305) node {$V'$};
	\draw[color=black](-6,4.633) node {$W$};
	
	\draw[color=black] (3.294,8.471) node {$\spanset{N_1,\ldots,N_m}$};
	\draw[color=black] (4.66,-2.957) node {$N_m$};
	\draw[color=black] (1.7,4) node {$Q_1$};
	\draw[color=black] (2.086,2.645) node {$N_1$};
	\draw[color=black] (7,-6.3) node {$Q_{m}$};
	\draw[color=black] (8.5,4.193) node {$\langle a_{11},\ldots,a_{m1} \rangle $};
	\draw [fill=black] (2.57,0.508) circle (1pt);
	\draw [fill=black] (3.758,-1.604) circle (1pt);
	\draw [fill=black] (2.966,-0.636) circle (1pt);
	\draw [fill=black] (2.284,4.6) circle (1.5pt);
	\draw[color=black] (3.2,4.2) node {$a_{1,t_1-t_2}$};
	\draw [fill=black] (3.934,5.392) circle (1.5pt);
	\draw[color=black] (4.264,4.9) node {$a_{11}$};
	\draw [fill=black] (7.454,-4.244) circle (1.5pt);
	\draw[color=black] (8.2,-4.321) node {$a_{m 1}$};
	\draw [fill=black] (6.178,-5.124) circle (1.5pt);
	\draw[color=black] (7.15,-5.6) node {$a_{m,t_1-t_2}$};
	\draw[color=black] (-11.73,1) node {$M_{m+1}$};
	\draw[color=black] (-3.26,8.185) node {$P_{n-1}$};
	\draw[color=black] (-2.93,1) node {$M_{s-1}$};
	\draw[color=black] (-12.434,7.723) node {$P_s$};
	\draw[color=black] (-10,-4.145) node {$X$};
	\draw [fill=black] (-9.156,7.328) circle (1pt);
	\draw [fill=black] (-8.012,7.592) circle (1pt);
	\draw [fill=black] (-6.802,7.526) circle (1pt);
	\draw [fill=black] (-8.078,-1.472) circle (1pt);
	\draw [fill=black] (-7.33,-1.538) circle (1pt);
	\draw [fill=black] (-6.604,-1.384) circle (1pt);
	\draw [fill=ccqqqq,color=ccqqqq] (5.012,2.994) circle (1.5pt);
	\draw [fill=ccqqqq,color=ccqqqq] (8.212,-2.7) circle (1.5pt);
	\draw [fill=ccqqqq,color=ccqqqq] (7.512,-0.5) circle (1.5pt);
	\draw [fill=ccqqqq,color=ccqqqq] (5.56,4) circle (1.5pt);
	\draw[color=black] (5.298,2.4) node {$b_{m+1}$};
	\draw [fill=ccqqqq, color=ccqqqq] (6.002,0.728) circle (1.5pt);
	
	\draw [fill=ccqqqq, color=ccqqqq] (7.146,-1.736) circle (1.5pt);
	\draw[color=black] (7.388,-1.285) node {$b_{s-1}$};
	\end{scriptsize}
	\end{tikzpicture}
	\caption{ The $(k;k-t_1,k-t_2)$-SPID described in Remark \ref{lower_bound}.}
\end{figure}
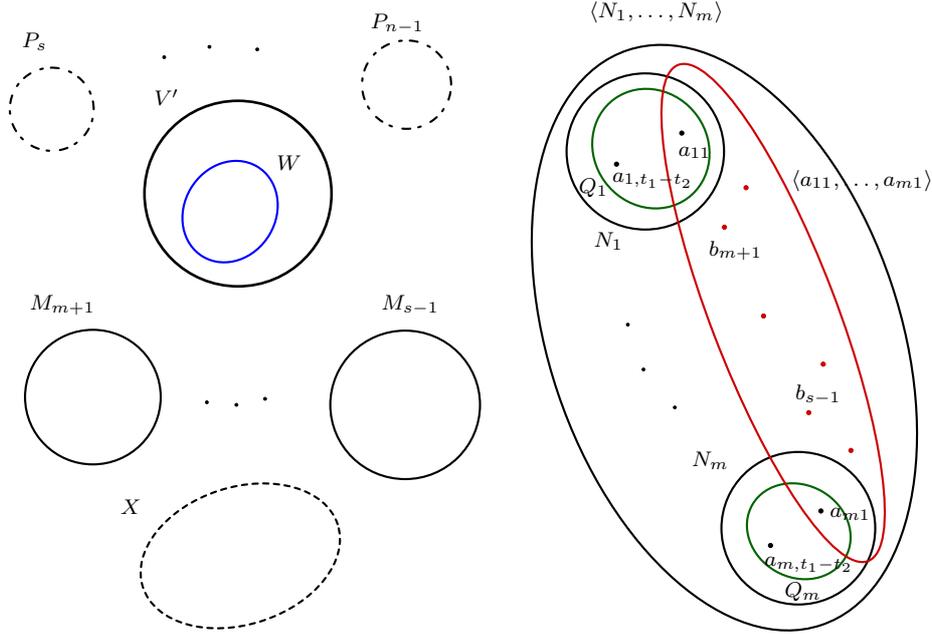

\textnormal{Hence, $\cS=\{\pi_1,\ldots,\pi_n\}$ is a set of $n$ distinct $k$-spaces pairwise meeting either in a space of dimension $k-t_1$ or of dimension $k-t_2$,  i.e. a $(k; k-t_1,k-t_2)$-SPID. Also, it is clear that $\cS$ is not a $(k-t_1)$-junta.\\
Now, we have that}
\begin{equation*}
\spanset{\cS}=\spanset{\pi_1,\ldots,\pi_n}=\spanset{V',N_1,\ldots,N_m,M_{m+1},\ldots,M_{s-1},P_s,\ldots,P_{n-1},X}.
\end{equation*}
\textnormal{So, by hypothesis,}
\begin{equation*}
\begin{split}
\dim\spanset{\cS}& = k +
(m-1)t_1+(s-m-1)(t_1-1)+(n-s)t_2+(t_1-m+1) \\
& =k+(n-1)(t_1-1)-(n-s)(t_1-t_2-1)+1 \\
& \leq k+(n-1)(t_1-1)-(t_1-t_2)+2.
\end{split}
\end{equation*}
\textnormal{We find that the array $\delta(\cS)$ corresponding to such a SPID is as follows:}
\begin{equation*}
\delta(\cS)=(k,\underbrace{t_1,\ldots, t_1}_{m-1 \,\,\text{times}}, \underbrace{t_1-1, \ldots, t_1-1}_{s-m-1\,\, \text{times}}, \underbrace{t_2, \ldots, t_2}_{n-s\,\, \text{times}}, t_1-m+1).
\end{equation*} 
\end{remark}

\medskip

\noindent In the following, we will show that if in addition we ask that the two possible values for the dimensions of the intersection between elements of the SPID are consecutive integers, then the bound in Theorem \ref{initial} is sharp. Towards this aim, we put beforehand the following result.

 \begin{prop}\label{turningspaces}
Let $t_1,t_2 \in \mathbb{Z}^+$ with $k > t_1 > t_2 \geq 2$. Let $\cS$ be a $(k;k-t_1,k-t_2)$-SPID in a vector space $\V$, with $n=|\cS| \geq 3$, such that $\dim\spanset{\cS}=k+(n-1)(t_1-1)+1$. Also let $\delta(\cS)$ be any non-increasing array associated with $\cS$.\\
Then, there is no $(k-t_1)$-sunflower of maximal dimension with at least three petals in $\cS$, if and only if
\begin{equation}\label{secondsequence}
\delta(\cS)=(k,t_1,t_1-1,\ldots,t_1-1).
\end{equation}
Moreover, if $\cS =\{\pi_1,\pi_2,\ldots,\pi_n\}$ has associated non-increasing array $\delta(\cS)$ like in (\ref{secondsequence}), then any permutation $\sigma$, fixing $\pi_1$ and $\pi_2$, does not change $\delta(\cS)$.
\end{prop}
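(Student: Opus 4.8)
The plan is to convert the extremal hypothesis on $\dim\spanset{\cS}$ into a rigid arithmetic constraint on $\delta(\cS)$, and then to translate that constraint into geometry through Remark \ref{rm:parametersforms}. First I would record the facts valid for every non-increasing array: $\delta_1=k$; $\delta_j\le t_1$ for $j\ge 2$ (exactly as in the proof of Proposition \ref{sorting}, since $\dim(\pi_i\cap\pi_j)\ge k-t_1$); and $\delta_2=t_1$, because a non-increasing array with $\delta_2=t_2$ would force $\delta_j\le t_2\le t_1-1$ for all $j\ge 2$ and hence $\dim\spanset{\cS}\le k+(n-1)(t_1-1)<k+(n-1)(t_1-1)+1$, contradicting the hypothesis. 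Summing the $\delta_j$ then shows the hypothesis is equivalent to
\[ \sum_{j=2}^{n}\bigl(t_1-\delta_j\bigr)=(n-1)t_1-\bigl[(n-1)(t_1-1)+1\bigr]=n-2, \]
so the total ``drop below $t_1$'' is exactly $n-2$, spread over the $n-1$ non-increasing values $\delta_2\ge\cdots\ge\delta_n$.

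Next I would read the leading block of $t_1$'s geometrically. By Remark \ref{rm:parametersforms}, a prefix $(\delta_1,\ldots,\delta_r)=(k,t_1,\ldots,t_1)$ occurs if and only if $\pi_1,\ldots,\pi_r$ form a $(k-t_1)$-sunflower of maximal dimension, and (as recorded in the proof of Theorem \ref{initial}, working with the greedy array of Proposition \ref{sorting}) the largest index $m$ with $\delta_m=t_1$ equals the maximal number of petals of such a sunflower in $\cS$. Thus ``$\cS$ has no maximal $(k-t_1)$-sunflower with at least three petals'' is equivalent to $m=2$, i.e.\ to $\delta_2=t_1>\delta_3$.

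The equivalence (\ref{secondsequence}) then drops out by combining these two steps. If $m=2$, then for \emph{any} non-increasing array the leading run of $t_1$'s has length one (a prefix $(k,t_1,t_1)$ would exhibit a three-petal maximal sunflower, forcing $m\ge 3$), so $\delta_3,\ldots,\delta_n\le t_1-1$; since these $n-2$ entries must absorb the whole drop $n-2$, each drops by exactly $1$ and $\delta(\cS)=(k,t_1,t_1-1,\ldots,t_1-1)$ — in particular this pins the array down uniquely, which is the fact needed for the ``Moreover'' clause. Conversely, if $\delta(\cS)$ has the form (\ref{secondsequence}) then $\delta_2=t_1>t_1-1=\delta_3$, so the largest index with value $t_1$ is $2$, whence $m=2$ and there is no three-petal maximal sunflower.

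For the ``Moreover'' part I would work under $m=2$. The key claim is that every $\pi_j$ with $j\ge 3$ satisfies $\dim(\pi_j\cap\spanset{\pi_1,\pi_2})=k-t_1+1$: putting $\pi_j$ in third position gives $\delta_3\le t_1$, so $\dim(\pi_j\cap\spanset{\pi_1,\pi_2})\ge k-t_1$, and equality $k-t_1$ would make $(k,t_1,t_1)$ a prefix, exhibiting $\{\pi_1,\pi_2,\pi_j\}$ as a three-petal maximal sunflower by Remark \ref{rm:parametersforms} and contradicting $m=2$; the reverse inequality follows since $\spanset{\pi_1,\pi_2}$ sits inside the running span at the step where $\delta_j=t_1-1$. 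Hence in any reordering fixing $\pi_1,\pi_2$ each later space meets the running span in dimension $\ge k-t_1+1$, so contributes $\delta\le t_1-1$; as $\delta_1+\delta_2=k+t_1$ is unchanged and $\dim\spanset{\cS}$ is fixed, the remaining $n-2$ entries again sum to $(n-2)(t_1-1)$ and are therefore all equal to $t_1-1$, so $\delta(\cS)$ is unchanged. The step I expect to be most delicate is precisely this linking of the arithmetic ``drop $=n-2$'' bookkeeping to the petal count $m$: the honest way to make the backward implication and the uniqueness go through is to use the greedy array of Proposition \ref{sorting}, for which $m$ is genuinely the largest index with $\delta_m=t_1$, and to lean throughout on the Remark \ref{rm:parametersforms} identification of a $(k,t_1,t_1)$-prefix with a three-petal maximal sunflower.
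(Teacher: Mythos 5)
Your proof is correct and follows essentially the same route as the paper's: $\delta_2=t_1$ via the sum bound, the identification of a $(k,t_1,t_1)$-prefix with a three-petal $(k-t_1)$-sunflower of maximal dimension, the greedy array of Proposition \ref{sorting} to make the backward implication legitimate (as you rightly note, for an arbitrary non-increasing array the ``largest index with value $t_1$'' need not equal the petal count $m$, which is exactly why the paper's necessity step also invokes Proposition \ref{sorting}), and the key claim $\dim(\pi_j\cap\spanset{\pi_1,\pi_2})=k-t_1+1$ proved by the same two bounds for the ``Moreover'' part. Your only real deviation is the closing step of the ``Moreover'' clause, which you finish with a summation argument (all $n-2$ later entries of $\delta(\cS_\sigma)$ are at most $t_1-1$ and must sum to $(n-2)(t_1-1)$, hence all equal $t_1-1$), whereas the paper computes each $\delta_j(\cS_\sigma)$ exactly from $\max_{3\le i\le n}\dim(\pi_i\cap\spanset{\pi_h \,|\, h\ne i})=k-t_1+1$ extracted from the greedy construction; your variant is marginally cleaner but the overall approach is the same.
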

\begin{proof}  The necessity is obvious because if any such a $\delta(\cS)$ is like in (\ref{secondsequence}), then, by Proposition \ref{sorting} and by Remark \ref{rm:parametersforms}, $\cS$ can not contain a  $(k-t_1)$-sunflower of maximal dimension with at least three petals. 

Regarding sufficiency, clearly we have $\dim(\pi_1 \cap\pi_2)=k-t_1$ and by hypothesis in any non-increasing array the largest index $m$ for which $\delta_m=t_1$, is 2. Now, if $\delta_n \leq t_1-2$, then
\begin{equation*}
\dim \spanset{\cS}=\sum^n_{i=1}\delta_i=k+t_1+\sum^{n-1}_{i=3} \delta_i+\delta_n \leq k+ t_1 +(n-3)(t_1-1)+t_1-2= k+(n-1)(t_1-1),
\end{equation*}
a contradiction. Hence, $(\delta_1,\delta_2,\ldots,\delta_n)=(k,t_1,t_1-1,\ldots,t_1-1)$.\\
Now, we show that any permutation of the $k$-spaces in $\cS$, fixing $\pi_1$ and $\pi_2$, does not change the array $(\ref{secondsequence})$.
First of all, we notice that
\begin{equation*}
  \hspace{3cm}  \dim( \pi_j \cap \spanset{\pi_1,\pi_2}) = k-t_1+1, \hspace{2cm} \textnormal{for all} \,\,j=3,\ldots,n.
\end{equation*}
Indeed, for $3 \leq j \leq n$,
  \begin{equation*}
      k-t_1 \leq \dim(\pi_j \cap \pi_1) \leq \dim( \pi_j \cap \spanset{\pi_1, \pi_2}) \leq \dim( \pi_j \cap \spanset{\pi_1,\pi_2,\ldots,\pi_{j-1}})=k-t_1+1.
  \end{equation*}
  If $\dim(\pi_j \cap \spanset{\pi_1,\pi_2})=k-t_1$, then
  \begin{equation*}
  \pi_j \cap \pi_1=\pi_j \cap \spanset{\pi_1 ,\pi_2}=\pi_j \cap \pi_2.
  \end{equation*} 
Consequently, we also have $\pi_j \cap \spanset{\pi_1 ,\pi_2}=\pi_1 \cap \pi_2$. This implies that $\pi_1,\pi_2,\pi_j$ form a $(k-t_1)$-sunflower of maximal dimension; in fact, $\dim\spanset{\pi_1,\pi_2,\pi_j}=k+2t_1$ and, eventually applying the same procedure as in the proof of Proposition \ref{sorting}, we would get a non-increasing array $\delta(\cS)$ with $\delta_2=\delta_3=t_1$; a contradiction. Hence, $\dim( \pi_j \cap \spanset{\pi_1,\pi_2}) = k-t_1+1$.

Nevertheless, since any non-increasing array $\delta(\cS)$ is as in \eqref{secondsequence}, we have
\begin{equation*}
    k-t_1+1=\max_{3 \leq i \leq n} \dim (\pi_i \cap \spanset{\pi_h \,|\, h \not = i}) \geq \dim (\pi_j \cap \spanset{\pi_h \,| \,h \not = j}) \geq \dim(\pi_j \cap \spanset{\pi_1,\pi_2})=k-t_1+1.
\end{equation*}
Hence, for any $I \subset I_n=\{1,\ldots,n\}$, with $1,2\in I$ and  $j \not \in I$,
\begin{equation*}
\hspace{3cm} \dim (\pi_j \cap \spanset{\pi_h \,| \,h \in I})=k-t_1+1.
\end{equation*}

Now, let $\sigma$ be any permutation of $I_n$ such that $\sigma(1)=1$ and $\sigma(2)=2$, then 
\begin{equation*}
\begin{split}
    \delta_j(\cS_\sigma)= & k-\dim (\pi_{\sigma(j)} \cap \spanset{\pi_{\sigma(1)},\pi_{\sigma(2)}, \ldots,\pi_{\sigma(j-1)}})=\\
   & k-\dim (\pi_{\sigma(j)} \cap \spanset{\pi_{1},\pi_{2},\pi_{\sigma(3)}, \ldots,\pi_{\sigma(j-1)}})=t_1-1,
    \end{split}
\end{equation*}
for all $j=3,\ldots,n$.

\end{proof}

\medskip
\noindent Next, we exhibit four families of $(k;k-t,k-t+1)$-SPIDs which are not ($k-t$)-juntas, and such that $\dim\spanset{\cS} = k+ (n-1)(t-1)+1$.


In the following, we will denote by $\delta'(\cS)$ any non-increasing array obtained as described in Proposition \ref{sorting}.

\subsection{SPIDs with $\delta'=(k,t,\dots,t,t-1,\dots,t-1,t+1-m)$}

\noindent Let $t \in \mathbb{Z}^+$ such that $2 \leq t \leq k-1$. Let $m \in \mathbb{Z}^+$ be a positive integer such that $m>2$. We provide a class of $(k;k-t,k-t+1)$-SPIDs with non-increasing array $$ \delta'(\cS) = (k,\underbrace{t, \ldots, t}_{m-1 \,\,\text{times}}, \underbrace{t-1, \ldots, t-1}_{n-m-1\,\, \text{times}}, t+1-m).$$

\medskip
\noindent {\bf $\bullet$ \, Class I}
\vspace{0.2cm}

\textnormal{Let $2<m \leq \min\{t+1,n-1\}$. Let $V',X,N_1,\ldots,N_m$ and $M_{m+1},\ldots,M_{n-1}$ be linearly independent subspaces of $\V$ such that $\dim V'=k-t$, $\dim X=t-m+1$, $\dim N_i=t$ for $i=1,\ldots,m,$ and $\dim M_j = t-1$ for $j=m+1,\ldots,n-1$ (Figure \ref{figexample1}).\\
Let $a_1,\ldots,a_m$ be $1$-spaces in $N_1,\ldots,N_m$, respectively. Also, let $b_{m+1},\ldots, b_{n-1}$ be $1$-spaces in $\spanset{a_1,\ldots,a_m}$ such that either
\begin{itemize}
    \item [$(i)$] at least two of them are the same $1$-space, or
    \item [$(ii)$] at least one of them is equal to $a_i$, with $i \in \{1,\ldots,m\}$.
\end{itemize}
Let $W$ be a $(k-t-1)$-space in $V'$. Then we define the $k$-spaces $\pi_1,\ldots,\pi_n$ as follows.
\begin{itemize}
    \item [$\circ$] $\pi_1=\spanset{V',N_1}$, $\pi_2=\spanset{V',N_2}$, $\ldots$,  $\pi_m=\spanset{V',N_m}$,
    \item[$\circ$] $\pi_{m+1}=\spanset{V',M_{m+1},b_{m+1}}$, $\ldots$, $\pi_{n-1}=\spanset{V',M_{n-1},b_{n-1}}$,
    \item [$\circ$] $\pi_{n}=\spanset{W,a_1,\ldots,a_m,X}$.
\end{itemize}
By Requests $(i)$ and $(ii)$, it is clear that the pairwise intersection of distinct spaces $\pi_i$ and $\pi_j$, $i,j=1, \ldots, n-1$, either is the $(k-t)$-space $V'$ or it is a $(k-t+1)$-space containing $V'$. Moreover, since each of the spaces $\pi_1,\ldots,\pi_{n-1}$ contains a unique $1$-space from the set $\{a_1,\ldots,a_m,b_{m+1},\ldots,b_{n-1}\}$ (note that by Properties $(i)$ and $(ii)$, some of the $1$-spaces could be equal), we have $\dim(\pi_n \cap \pi_i)=k-t$, for all $i=1,\ldots,n-1$.\\
Hence, the set $\cS=\{\pi_1,\ldots,\pi_n\}$ is a set of $n$ distinct $k$-spaces pairwise meeting in a space of dimension $k-t$ or $k-t+1$. Also, since not all pairwise intersections equal the same $(k-t)$-space, $\cS$ is not a $(k-t)$-junta.\\
The set $\{a_1,\ldots,a_m,b_{m+1},\ldots,b_{n-1}\}$ is contained in $\spanset{N_1,\ldots,N_m}$ and $W \subset V'$. Then
\begin{equation*}
\spanset{\cS}=\spanset{\pi_1,\ldots,\pi_n}=\spanset{V',N_1,\ldots,N_m,M_{m+1},\ldots,M_{n-1},X}.
\end{equation*}
Clearly, since $V',X,N_1,\ldots,N_m$ and $M_{m+1},\ldots,M_{n-1}$ are linearly independent spaces of $\V$, we have that
\begin{align*}
\dim\spanset{\cS}& = k-t+m \cdot t+ (n-1-m) \cdot (t-1) + t-m+1 \\
& = k+ (n-1)(t-1)+1.
\end{align*}
Arguing as in Proposition \ref{sorting}, we find
\begin{equation*}
    \delta'(\cS)=(k,\underbrace{t, \ldots, t}_{m-1 \,\,\text{times}}, \underbrace{t-1, \ldots, t-1}_{n-m-1\,\, \text{times}}, t+1-m).
\end{equation*} }

\begin{figure}[h!]
	\centering
\definecolor{qqqqff}{rgb}{0,0,1}
	\definecolor{ccqqqq}{rgb}{0.8,0,0}
	\begin{tikzpicture}[line cap=round,line join=round,>=triangle 45,x=1cm,y=1cm,scale=0.35]
	\clip(-24.449808129530336,-13.420236848002814) rectangle (28.599380115097805,16.698721959608733);
	\draw [line width=0.8pt] (-13.568449754941666,7.417275259221697) circle (4.7456545844381cm);
	\draw [color=qqqqff,line width=0.8pt] (-13.965900873626893,8.530138391540335) circle (3.040262058333512cm);
	\draw [rotate around={-49.74860106050114:(2.9632897790603066,6.649205701979444)},line width=0.8pt] (2.9632897790603066,6.649205701979444) ellipse (6.7253285394534705cm and 5.971865410278013cm);
	\draw [color=ccqqqq,line width=0.8pt] (2.8884112653388643,6.124366770138653) circle (3.3322298235126295cm);
	\draw [line width=0.8pt] (5.14871577730176,9.154136647876143) circle (1.8375768964829389cm);
	\draw [line width=0.8pt] (4.138792484722597,2.806047380235687) circle (1.837576896482939cm);
	\draw [line width=0.8pt] (-0.6222744660077462,7.326656404161465) circle (1.8375768964829404cm);
	\draw [line width=0.8pt,dash pattern=on 2pt off 2pt] (-15.29020800108607,-6.283262252976783) circle (3.8584319807040073cm);
	\draw [line width=0.8pt,dash pattern=on 1pt off 3pt on 3pt off 4pt] (0.43574041193233,-6.860361277307734) circle (2.8762627980932627cm);
	\draw [line width=0.8pt,dash pattern=on 1pt off 3pt on 3pt off 4pt] (11.304438703498564,-4.407690423901193) circle (2.8762627980932605cm);
	\begin{scriptsize}
	\draw[color=black] (-13.387645242532884,13.731705322103952) node {$V'$};
	\draw[color=black] (-13.886303500937048,4.4) node {$W$};
	\draw[color=black] (2.7688823297620613,14.33009523218895) node {$\spanset{N_1,\ldots,N_m}$};
	\draw[color=black] (7.107209177878296,7.39874544037106) node {$N_2$};
		\draw[color=black] (9.107209177878296,5.39874544037106) node {$\spanset{a_1,\ldots,a_m}$};
	\draw[color=black] (3.1179431106449766,0.8663222552765036) node {$N_m$};
	\draw[color=black] (-2.4171635576412545,9.141365861593966) node {$N_1$};
	\draw[color=black] (-15.332412450309127,-11.5) node {$X$};
	\draw[color=black] (0.2755910377412365,-10.5) node {$M_{m+1}$};
	\draw[color=black] (11.196206896792448,-8) node {$M_{n-1}$};
	\draw [fill=black] (4.475433582248984,8.240396526018804) circle (2pt);
	\draw[color=black] (5.012844492580804,8.196598653817725) node {$a_2$};
	\draw [fill=black] (3.754059801835296,3.7197875020930256) circle (2pt);
	\draw[color=black] (4.165125453293723,4.27) node {$a_m$};
	\draw [fill=black] (0.3876488265714181,6.941923721274166) circle (2pt);
	\draw[color=black] (0.7742492961454014,7.5) node {$a_1$};
	\draw [color=ccqqqq,fill=ccqqqq] (2.407495411729745,7.470931160244204) circle (2pt);
	\draw[color=black] (2.3,8.1) node {$b_{n-1}$};
	\draw [color=ccqqqq,fill=ccqqqq] (4.407495411729745,6) circle (2pt);
	\draw [color=ccqqqq,fill=ccqqqq] (1.541846875233319,4.970168721476751) circle (2pt);
	\draw[color=black] (1.921163290474981,5.85290483931815) node {$b_{m+1}$};
	\draw[color=ccqqqq,fill=ccqqqq] (4.165125453293723,4.27)  (5.8,6.6) circle (2pt);
	\draw[fill=black] (4.165125453293723,4.27) (5,-7) circle (1pt);
	\draw[fill=black] (4.165125453293723,4.27)  (5.5,-6.8) circle (1pt);
		\draw[fill=black] (4.165125453293723,4.27)  (6,-6.5) circle (1pt);
	
	\end{scriptsize}
	\end{tikzpicture}
	\caption{The ($k;k-t,k-t+1$)-SPID described in Class I.}
	\label{figexample1}
\end{figure}

\begin{lemma} \label{3petals}
Let $\cS$ be a $(k;k-t, k-t+1)$-SPID of $\V$, where $2 \leq  t \leq k-1$, such that $\cS$ is not a $(k-t)$-junta, with $| \cS |= n  \geq 3$.\\
If $\dim\spanset{\cS}=k+(n-1)(t-1)+1$  and there exists a $(k-t)$-sunflower of maximal dimension with at least three petals in $\cS$, then $\cS$ belongs to Class I. 
\end{lemma}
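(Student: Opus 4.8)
The plan is to first pin down the array $\delta'(\cS)$ and the coarse incidence structure by revisiting the proof of Theorem \ref{initial} under the equality hypothesis, and then to reconstruct explicitly the data $V',W,N_1,\dots,N_m,M_{m+1},\dots,M_{n-1},X$ and the one-spaces $a_i,b_j$ of the Class I description directly from $\cS$.

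First I would apply Proposition \ref{sorting} and, exactly as in the proof of Theorem \ref{initial}, fix a non-increasing ordering whose first $m$ members form a $(k-t)$-sunflower of maximal dimension of the largest possible size, with center $V'$, so $\dim V'=k-t$. Since a maximal-dimension sunflower with at least three petals exists by hypothesis, $m\ge 3$, and by Remark \ref{rm:parametersforms} $m$ is the largest index with $\delta_m=t$. As $\cS$ is not a $(k-t)$-junta, some $\pi_r$ with $r>m$ avoids $V'$, and the chain (\ref{bound}) reads $\dim\spanset{\cS}\le k+(n-1)(t-1)-(n-r)(m-2)+1$; the equality hypothesis together with $m\ge 3$ forces $r=n$. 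Taking $\pi_r$ to be any $V'$-avoiding space shows there is exactly one, namely $\pi_n$, so $\pi_1,\dots,\pi_{n-1}$ all contain $V'$. Tracking the remaining equalities in (\ref{bound}) yields the announced array
\[
\delta'(\cS)=(k,\underbrace{t,\ldots,t}_{m-1},\underbrace{t-1,\ldots,t-1}_{n-m-1},t+1-m),
\]
and, reading the quotient estimate of Theorem \ref{initial} with $\varepsilon:=(k-t)-\dim(\pi_n\cap V')$, equality forces $\varepsilon=1$ and $\dim_\Pi(\pi_n\cap\pi_i)=\varepsilon=1$ for $i\le m$. Hence $W:=\pi_n\cap V'$ is a hyperplane of $V'$, and $\dim(\pi_n\cap\pi_i)=k-t$ for each $i\le m$.

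Next I would fix the building blocks. For $i\le m$ write $\pi_n\cap\pi_i=\spanset{W,a_i}$ with $a_i$ a one-space outside $V'$, and choose a complement $N_i$ of $V'$ in $\pi_i$ with $a_i\in N_i$; then $\pi_i=\spanset{V',N_i}$ and $\spanset{\pi_1,\dots,\pi_m}=V'\oplus N_1\oplus\cdots\oplus N_m$. Since $U:=\spanset{W,a_1,\dots,a_m}\subseteq\pi_n\cap\spanset{\pi_1,\dots,\pi_{n-1}}$ has dimension $k-t+m-1=k-\delta_n$, which equals $\dim\bigl(\pi_n\cap\spanset{\pi_1,\dots,\pi_{n-1}}\bigr)$, the two subspaces coincide, so $\pi_n\cap\spanset{\pi_1,\dots,\pi_{n-1}}=U$. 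Taking $X$ to be a complement of $U$ in $\pi_n$ (of dimension $t-m+1$), this equality gives $X\cap\spanset{\pi_1,\dots,\pi_{n-1}}=0$, so $X$ is independent of everything else and $\pi_n=\spanset{W,a_1,\dots,a_m,X}$, as in Class I.

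The heart of the argument is the reconstruction of the middle spaces $\pi_j$, $m<j<n$, and the crux is that $\dim(\pi_j\cap\pi_n)=k-t$, not $k-t+1$. Indeed $\pi_j\cap\pi_n\subseteq U$ and contains $W$; were its dimension $k-t+1$, it would equal $\spanset{W,c_1,c_2}$ with $c_1,c_2\in\spanset{a_1,\dots,a_m}\subseteq\spanset{\pi_1,\dots,\pi_m}$ independent modulo $V'$, so $\spanset{V',c_1,c_2}\subseteq\pi_j\cap\spanset{\pi_1,\dots,\pi_m}$ would have dimension $k-t+2$, forcing $\delta_j\le t-2$ and contradicting $\delta_j=t-1$. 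Therefore $\pi_j\cap\pi_n=\spanset{W,b_j'}$, and reducing modulo $V'$ yields a one-space $b_j\in\spanset{a_1,\dots,a_m}$ with $b_j\subseteq\pi_j$ and $\spanset{V',b_j}=\pi_j\cap\spanset{\pi_1,\dots,\pi_{j-1}}$ (both of dimension $k-t+1$). Choosing $M_j$ a complement of $\spanset{V',b_j}$ in $\pi_j$ and arguing inductively on $j$, each $M_j$ is independent of $\spanset{\pi_1,\dots,\pi_{j-1}}$, so in the end $V',X,N_1,\dots,N_m,M_{m+1},\dots,M_{n-1}$ are linearly independent with the prescribed dimensions and $\pi_j=\spanset{V',M_j,b_j}$. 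Finally I would verify conditions $(i)$–$(ii)$: because $\pi_n$ meets every petal in dimension $k-t$ and $\pi_1,\dots,\pi_m$ form a sunflower, the value $k-t+1$ must come from a pair among the middle spaces; a short computation modulo $V'$, using the independence of the $M_j$ from $\spanset{N_1,\dots,N_m}$, shows that if all $b_j$ were distinct and none equalled any $a_i$ then every pairwise intersection would be exactly $V'$, making $\cS$ a $(k;k-t)$-SCID and contradicting that it is a genuine two-value SPID. Hence $(i)$ or $(ii)$ holds and $\cS$ lies in Class I. The main obstacle is precisely this dimension dichotomy for $\pi_j\cap\pi_n$: forcing it to the minimal value $k-t$ is what pins the recycled line $b_j$ inside $\spanset{a_1,\dots,a_m}$, and combining it with the careful inductive bookkeeping of linear independence is where the real work lies.
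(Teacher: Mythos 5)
Your proof is correct and takes essentially the same route as the paper's: order $\cS$ so that a largest maximal-dimension $(k-t)$-sunflower comes first, use equality in (\ref{bound}) to force $r=n$ and the array $(k,t,\ldots,t,t-1,\ldots,t-1,t+1-m)$, reconstruct $W$, the $a_i$, $N_i$, $X$, then the $b_j$, $M_j$, and finally obtain conditions $(i)$/$(ii)$ of Class I from the fact that some pair of codewords must meet in dimension $k-t+1$. Your deviations are refinements of execution rather than a different method: you deduce that $\pi_1,\ldots,\pi_{n-1}$ all contain $V'$ and that $\dim(\pi_n\cap\pi_i)=k-t$ for $i\le m$ by tracking where equality holds in the bound (where the paper instead applies Theorem \ref{initial} to $\cS'=\{\pi_1,\ldots,\pi_{n-1}\}$, which strictly speaking presupposes $t\ge 3$ and that $\cS'$ realizes both intersection values), and you verify $(i)$/$(ii)$ by contraposition rather than by the paper's direct argument that a $(k-t+1)$-dimensional intersection forces $b_i=b_j$ or $b_j=a_i$.
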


\begin{proof}
Since $\dim\spanset{\cS}=k+(n-1)(t-1)+1$, from the proof of Theorem \ref{initial} we get
\[
\sum^n_{i=1} \delta_i=k+(n-1)(t_1-1)-(n-r)(m-2)+1.\] Moreover, since this implies that $(n-r)(m-2)=0$ and $m \geq 3$, necessarily $r=n$. Then,
\begin{equation} \label{case1}
    \delta'(\cS)=(k,\underbrace{t, \ldots, t}_{m-1 \,\,\text{times}}, \underbrace{t-1, \ldots, t-1}_{n-m-1\,\, \text{times}}, t+1-m).
\end{equation}

Consider $\cS'=\{\pi_1,\ldots,\pi_{n-1}\}$. Since $\dim \spanset{\cS'}=k+(n-2)(t-1)+m-1 \geq k+(n-2)(t-1)+2$, then, by Theorem \ref{initial}, we have that $\cS'$ is a $(k-t)$-junta.\\
Let $V'$ be the common $(k-t)$-space through which the $k$-spaces $\pi_1, \ldots, \pi_{n-1}$ pass, and denote $k-t-\dim(\pi_n \cap V')$ by $\varepsilon$. Since $\cS$ is not a junta, $\varepsilon \geq 1$; indeed, by the proof of Theorem \ref{initial},  necessarily $\varepsilon=1$. Let $W$ denote the $(k-t-1)$-subspace $ \pi_n \cap V'$.\\
Furthermore, we note the first $k$-spaces form a sunflower of maximal dimension since $\delta_2=\cdots=\delta_m=t$. Hence, there exist $t$-spaces $N_1, \ldots,N_m$, with $i=1,\ldots,m$, such that $N_1,\ldots,N_m,V'$ are linearly independent, and $\pi_i=\langle V', N_i \rangle$. \\
Also, by hypothesis, there exist at least two $k$-spaces in $\cS$ such that they meet in a $(k-t+1)$-space.\\
We first show that
\begin{equation*}
   \hspace{3cm} \dim(\pi_n \cap \pi_j)=k-t, \hspace{2cm} \textnormal{for all} \,\, j \in \{1,\ldots,n-1\}.
\end{equation*} 
For this purpose, suppose by way of contradiction that there exists a $j \in \{1,\ldots,n-1\}$ such that $\dim(\pi_n \cap \pi_j)=k-t+1$; we may distinguish between two cases:
\begin{itemize}
    \item [$(a)$] \textit{ $j \in \{1, \ldots, m\}$}. Then there are two $1$-spaces $a_{j_1}$ and $a_{j_2}$ in $ \pi_n \cap \pi_j$ not in $V'$, and there is at least another $1$-space $a_i \in \pi_n \cap \pi_i$, for all $i \in \{1,\ldots,m\} \setminus \{j\}$ not in $V'$.  Without loosing any generality, we may choose the $N_i$'s in such a way that $a_1 \in N_1, \ldots, a_m \in N_m$ and $\langle  a_{j_1}, a_{j_2} \rangle \subseteq  N_j$. Hence,
    \begin{equation*}
        \pi_n \cap \spanset{ \pi_1, \ldots, \pi_{n-1}} \supseteq \spanset{ W, a_1,\ldots,a_{j-1},a_{j_1},a_{j_2},a_{j+1},\ldots,a_{m}},
    \end{equation*}
    obtaining that
    \begin{equation*}
     t-m+1=\delta_n \leq k- \dim \spanset{W, a_1,\ldots,a_{j-1},a_{j_1},a_{j_2},a_{j+1},\ldots,a_{m}}=t-m,
    \end{equation*}
 a contradiction.  
 \item[$(b)$] $j \in \{m+1, \ldots, n-1\}$. Since, by Point $(a)$, $\dim(\pi_n \cap \pi_i)=k-t$ for every $i=1,\ldots,m$,  $\pi_n$ contains the $1$-spaces $a_1 \in \pi_1,\ldots, a_m  \in \pi_m$, meeting $V'$ trivially. Furthermore, since $\dim(\pi_n \cap \pi_j)=k-t+1$, there must be two $1$-spaces $a',a'' \in \pi_n \cap \pi_j$ not in $V'$ and such that $\langle a', a'' \rangle \cap W$ is trivial.
Also, the subspace $\spanset{a',a''}$ can not be contained in $\spanset{V',a_1,\ldots,a_m}$, otherwise we would have
 \begin{equation*}
     \pi_{j} \cap \spanset{ V', a_1,\ldots, a_m}  \supseteq \langle V',a',a'' \rangle,
 \end{equation*}
 and, consequently,
\begin{equation*}
t-1=\delta_{j}\leq k- \dim \spanset{V',a',a''}=t-2.
\end{equation*}
Moreover, $\spanset{a',a''}$ meets $\spanset{V',a_1,\ldots,a_m}$ in a 1-space, otherwise
\begin{equation*}
    \pi_n \cap \spanset{\pi_1,\ldots,\pi_{n-1}} \supseteq \spanset{W,a_1,\ldots,a_m,a',a''}
\end{equation*}
obtaining again $t-m+1=\delta_n \leq t-m-1$.
However, if $b \in \spanset{a',a''} \setminus \spanset{V',a_1,\ldots,a_m}$, then
\begin{equation*}
t-m+1=\delta_n \leq k- \dim \spanset{W,a_1\ldots,a_m,b}=t-m;
\end{equation*}
which is again a contradiction.
\end{itemize}
Hence, definitely $\dim(\pi_n \cap \pi_j)=k-t$, for each $j \in \{1,\ldots,n-1\}$. 

\medskip

\noindent Now, since $\delta_n=t-m+1$ and $\pi_n$ intersects $V'$ in the $(k-t-1)$-dimensional subspace $W$, we get that the $k$-space $\pi_n$ may be realised as follows $$\pi_n=\spanset{ W,a_1,\ldots,a_m,X},$$ for suitable points $a_1 \in N_1,\ldots, a_m\in N_m$ and $X$ a $(t-m+1)$-dimensional subspace such that $V',N_1,\ldots,N_m,X$ are linearly independent.

Since $\pi_n \cap \pi_j$, $j=m+1,\ldots,n-1$,  is a $(k-t)$-space contained in $\spanset{W,a_1,\ldots,a_m},$ there must  exist a 1-space $b_j$ in $\spanset{a_1,\ldots,a_m} \setminus W$ such that $\pi_n \cap \pi_j=\spanset{W,b_j}$ otherwise $$t-m+1=\delta_n=k-\dim(\pi_n \cap \spanset{\pi_1,\ldots,\pi_{n-1}})\leq k-\dim\spanset{W,a_1,\ldots,a_m,b_j}=t-m,$$
a contradiction. Moreover, since $\delta_j=t-1$, it is immediate that for any  $j=m+1,\ldots,n-1$, we have $\pi_{j}= \langle V',b_j,M_j\rangle$, with $M_j$ a $(t-1)$-space and such that $V',N_1,\ldots, N_m, M_{m+1},\ldots,$\\ $M_{n-1}$ and $X$ are linearly independent.\\
Note explicitly that if $\dim(\pi_i \cap \pi_j)=k-t+1 $, with $i,j \in \{m+1,\ldots,n-1\}$, then $b_i=b_j$. Indeed, let $\pi_i \cap \pi_j=\spanset{V',a'}$. This space is contained in  $\spanset{\pi_1,\pi_2,\ldots,\pi_m}$, since if $a' \not \in \spanset{\pi_1, \pi_2,\ldots,\pi_m}$, assuming $j >i$, we have
\begin{equation*}
    \pi_j \cap \spanset{\pi_1,\ldots,\pi_i} \supseteq \spanset{V',b_j,a'},
\end{equation*}
obtaining $\delta_j \leq t-2$. So, $\spanset{V',a'} \subseteq \spanset{\pi_1,\ldots,\pi_m}$.\\
Now, since $\pi_i \cap \spanset{\pi_1,\pi_2,\ldots, \pi_m} =\spanset{V',b_i}$ and  $\pi_j \cap  \spanset{\pi_1,\pi_2,\ldots,\pi_m}  =\spanset{V',b_j}$ have dimension $k-t+1$ and
\begin{equation*}
    \pi_i \cap \pi_j= \pi_i \cap \pi_j \cap \spanset{\pi_1, \pi_2, \ldots, \pi_m},
\end{equation*}
we get that both $\spanset{V',b_i}$ and $\spanset{V',b_j}$ are equal to $\pi_i \cap \pi_j$. Now, since we assumed $j > i$, if $b_i \not = b_j$ then again we would have
$t-1=\delta_j \leq t-2$, which is not the case.\\
Suppose that there exist $i \in \{1,\ldots,m\}$ and $j \in \{m+1, \ldots,n-1\}$ such that $\dim(\pi_i \cap \pi_j)=k-t+1$, then there exists a $1$-space $a' \in N_i$ such that
\begin{equation*}
    \spanset{V',a'}=\pi_i \cap \pi_j \subseteq \pi_j \cap \spanset{\pi_1,\ldots,\pi_m}=\spanset{V',b_j}.
\end{equation*}
Hence, $b_j \in \spanset{V',a'}$ and, since $\delta_j=t-1$, $b_j \in \spanset{a_1,\ldots,a_m} \cap N_i$ otherwise $$t-1=\delta_j=k-\dim(\pi_j \cap \spanset{\pi_1,\ldots,\pi_{j-1}}) \leq k -\dim\spanset{V',a',b_j}=t-2;$$
this implies that $b_j=a_i$.\\
Note explicitly that a $k$-space $\pi_j$ in $\cS$, with $j \in \{m+1,\ldots,n-1\}$, can meet at most one $\pi_i$, with $i \in \{1,\ldots,m\}$, in a $(k-t+1)$-space.\\
Finally, it is possible that in $\cS$ there exists a $k$-space $\pi_j$, with $j \in \{m+1,\ldots,n-1\}$, that intersects $\pi_i$, with $i \in \{1,\ldots,m\}$, and $\pi_h$, with $h\in \{m+1,\ldots,n-1\}$, in two $(k-t+1)$-spaces. From previous results, $b_h=b_j=a_i$. So, $\cS$ belongs to Class I. 
\end{proof}

\subsection{SPIDs with $\delta'=(k,t,t-1,t-1,\ldots,t-1)$}

\medskip
\noindent {\bf $\bullet$ \, Class II}
\vspace{0.2cm}

\noindent Choose integers $n \geq 3$ and $k,t$ such that $2 \leq t \leq k-1$. Let $W$ be a $(k-t+1)$-subspace of $\V$, and $X_1,X_2$ $t$-spaces such that $\dim \spanset{X_1,X_2}=2t-1$. Moreover, consider $M_3,\ldots, M_n$  $(t-1)$-subspaces of $\mathbb{V}$ such that $W$, $\spanset{X_1,X_2}$, $M_3,\ldots,M_n$ are linearly independent.
Let $W_1$  and $W_2$ be two $(k-t)$-spaces in $W$ (Figure \ref{fig:example2}).\\
Then we define the sets $\pi_1,\ldots,\pi_n$ as follows:
\begin{itemize}
    \item [$\circ$] $\pi_1=\spanset{W_1,X_1}$, $\pi_2=\spanset{W_2,X_2}$,
    \item[$\circ$] $\pi_{3}=\spanset{W,M_{3}}$, \ldots$,\pi_{n}=\spanset{W,M_n}$.
\end{itemize}
Now, since $\dim(X_1 \cap X_2)=1$, $\pi_1 \cap \pi_2$ is a $(k-t)$-space. Moreover, these two spaces meet other ones either in $W_1$ or in $W_2$, and $\{\pi_3,\ldots,\pi_n\}$ is a $(k-t+1)$-sunflower with center $W$.
Clearly,
\begin{equation*}
\spanset{\cS}=\spanset{\pi_1,\ldots,\pi_n}=\spanset{W,X_1,X_2,M_3,\ldots,M_{n}}.
\end{equation*}
Since $W$, $\spanset{X_1,X_2}$, $M_3,\ldots,M_n$ are linearly independent, we find that
\begin{align*}
\dim\spanset{\cS}& = k-t+1 +2t-1+(n-2)\cdot(t-1) \\
& = k+ (n-1)(t-1)+1.
\end{align*}
Again arguing as in Proposition \ref{sorting}, we get
\begin{equation*}
    \delta'(\cS)=(k,t,t-1, \ldots, t-1).
\end{equation*}

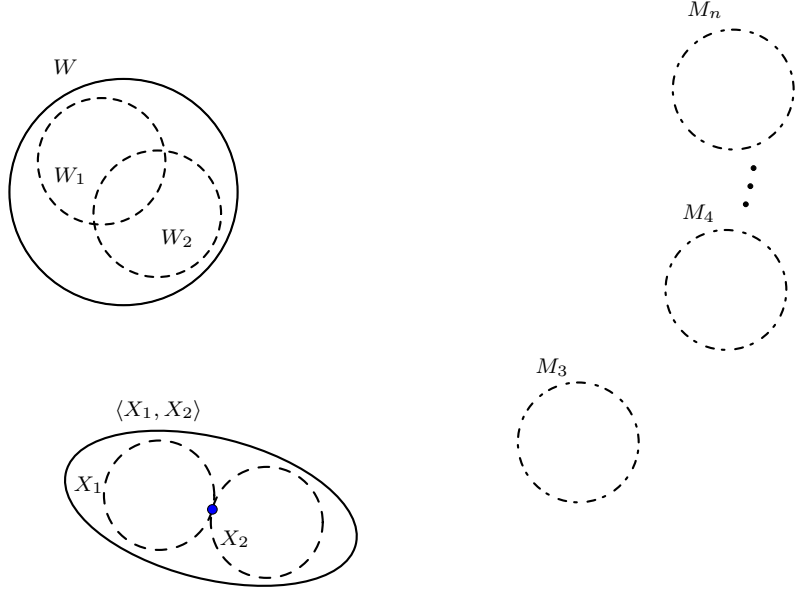
\begin{figure}[h!]
	\centering
	
	\begin{tikzpicture}[line cap=round,line join=round,>=triangle 45,x=1cm,y=1cm, scale=0.6]
	\clip(-12.5,-8.564248782363794) rectangle (15.281224732444986,6);
	\draw [line width=0.1pt] (-5.54,0.97) circle (2.500499950009998cm);
	\draw [line width=0.8pt] (-5.54,0.97) circle (2.500499950009998cm);
	\draw [line width=0.8pt,dash pattern=on 3pt off 3pt] (-4.8,0.49) circle (1.3984276885130673cm);
	\draw [line width=0.8pt,dash pattern=on 3pt off 3pt] (-6.02,1.65) circle (1.3978555004005235cm);
	\draw [line width=0.8pt,dash pattern=on 4pt off 4pt] (-4.76,-5.73) circle (1.2088010589009264cm);
	\draw [line width=0.8pt,dash pattern=on 4pt off 4 pt] (-2.4,-6.33) circle (1.2264828432820187cm);
	\draw [rotate around={-14.264512298079884:(-3.6247355639503485,-6.018626551538047)},line width=0.8pt] (-3.6247355639503485,-6.018626551538047) ellipse (3.274686538053768cm and 1.5602648844560156cm);
	\draw [line width=0.8pt,dash pattern=on 3pt off 3pt on 1pt off 4pt] (4.427766919525248,-4.562851209901641) circle (1.3242356285797476cm);
	\draw [line width=0.8pt,dash pattern=on 3pt off 3pt on 1pt off 4pt] (7.66457854811803,-1.1922892740590156) circle (1.3242356285797476cm);
	\draw [line width=0.8pt,dash pattern=on 3pt off 3pt on 1pt off 4pt] (7.8241289994439285,3.235252871129182) circle (1.3242356285797476cm);
	\begin{scriptsize}
	
	\draw[color=black] (-6.815269008050914,3.7441194176634207) node {$W$};
	\draw[color=black] (-4.366019099465826,-0.0463948806711543) node {$W_2$};
	\draw[color=black] (-6.708779881590693,1.330365884565077) node {$W_1$};
	\draw[color=black] (-6.3,-5.5) node {$X_1$};
	\draw [fill=blue] (-3.592479786071121,-6.043203687825456) circle (3pt);
	\draw[color=black] (-3.1,-6.69181497543824) node {$X_2$};
	\draw[color=black] (-4.774227417563341,-3.8521049364990136) node {$\spanset{X_1,X_2}$};
	\draw[color=black] (3.8513918257145763,-2.9) node {$M_3$};
	\draw[color=black] (7.063813807264583,0.5) node {$M_4$};
	\draw[color=black] (7.205799309211544,5) node {$M_n$};
	\draw [fill=black] (8.1,0.7) circle (1.5pt);
	\draw [fill=black] (8.2,1.1) circle (1.5pt);
	\draw [fill=black] (8.27,1.5) circle (1.5pt);
	
	\end{scriptsize}
	\end{tikzpicture}
	\caption{The $(k;k-t,k-t+1)$-SPIDs described in Class II.}
	\label{fig:example2}
\end{figure}

We observe that particular examples in this class contain $(k-t+1)$-sunflowers of maximal dimension, but do not contain  $(k-t)$-sunflowers of maximal dimension. Nonetheless, they are $(k-t-1)$-juntas.

\bigskip

\noindent {\bf $\bullet$ \, Class III} 
\vspace{0.2cm}

\noindent Choose integers $n\geq 3$, $2\leq s<n$ and $k,t$ such that $2 \leq t \leq k-1$. Let $\mathbb{V}$ be a vector space over a field $\mathbb{F}$ which is either infinite or else a finite field $\mathbb{F}$ of order $q$ with $q$ a prime power such that $q+1 \geq s$. Let $V'$, $\spanset{X_1,X_2},M_3,\ldots,M_n$ be linearly independent subspaces of $\mathbb{V}$ such that $\dim V'=k-t+2$, and $\dim X_1=t$, $\dim X_2=t-1$ with $\dim(X_1 \cap X_2)=1$ and $\dim M_i=t-1$, for $i=3,\ldots,n$.\\
Let $W_0,W_1,\ldots,W_s$ be  distinct $(k-t+1)$-spaces in $V'$ such that $W_1,\ldots,W_s$  go through a $(k-t)$-space $W$ (Figure \ref{fig:example3}), and $W_0$ does not pass through $W$. We define the sets

$$\pi_1  = \spanset{W,X_1}, \, \pi_2=\spanset{W_0,X_2},$$ 
$$\pi_{3}=\spanset{W_1,M_{3}}, \ldots,\, \pi_{m_1}=\spanset{W_1,M_{m_1}},$$ 
$$\pi_{m_1+1}=\spanset{W_2,M_{m_1+1}}, \ldots,\,  \pi_{m_2}=\spanset{W_2,M_{m_2}},$$
    $$ \ldots$$
$$\pi_{m_{s-1}+1}=\spanset{W_s,M_{m_{s-1}+1}},\ldots,\, \pi_{n}=\spanset{W_s,M_n}.$$

Clearly, $\cS=\{\pi_1,\ldots,\pi_n\}$ is a $(k;k-t,k-t+1)$-SPID which is not a $(k-t)$-junta and
\begin{equation*}
\spanset{\cS}=\spanset{\pi_1,\ldots,\pi_n}=\spanset{V',X_1,X_2,M_3,\ldots,M_{n}}.
\end{equation*}
Since $V'$, $\langle X_1,X_2 \rangle,M_3,\ldots,M_n$ are linearly independent, we find that
\begin{align*}
\dim\spanset{\cS}& = k-t+2+ 2t-2 +(n-2)(t-1) \\
& = k+ (n-1)(t-1)+1.
\end{align*}
Also in this case we have $\delta'(\cS)=(k,t,t-1, \ldots, t-1)$.

\begin{figure}[h!]
	\centering
	\begin{tikzpicture}[line cap=round,line join=round,>=triangle 45,x=1cm,y=1cm,scale=0.5]
	\clip(-12.225287442595883,-9.377756433558412) rectangle (16.875795269099516,5.897185128969249);

	\draw [line width=0.8pt] (-5.54,0.97) circle (3.745043714296057cm);
	\draw [line width=0.8pt,dash pattern=on 3pt off 3pt] (-4.76,-5.73) circle (1.2088010589009264cm);
	\draw [line width=0.8pt,dash pattern=on 3pt off 3pt] (-2.4,-6.33) circle (1.2264828432820187cm);
	\draw [rotate around={-14.264512298079884:(-3.6247355639503485,-6.018626551538047)},line width=0.8pt] (-3.6247355639503485,-6.018626551538047) ellipse (3.274686538053768cm and 1.5602648844560156cm);
	\draw [line width=0.8pt,dash pattern=on 3pt off 3pt on 1pt off 4pt] (4.427766919525248,-4.562851209901641) circle (1.2924395537122813cm);
	\draw [line width=0.8pt,dash pattern=on 3pt off 3pt on 1pt off 4pt] (7.66457854811803,-1.1922892740590156) circle (1.3242356285797476cm);
	\draw [line width=0.8pt,dash pattern=on 3pt off 3pt on 1pt off 4pt] (7.8241289994439285,3.235252871129182) circle (1.3446189051177297cm);
	\draw [rotate around={60.49484859913861:(-5.057400164165984,2.681701029239108)},line width=0.8pt,dash pattern=on 2pt off 2pt] (-5.057400164165984,2.681701029239108) ellipse (1.71107100606527cm and 1.1119699114429538cm);
	\draw [rotate around={-13.654785858284296:(-4.355486654152758,1.663926439719925)},line width=0.8pt,dash pattern=on 2pt off 2pt] (-4.355486654152758,1.663926439719925) ellipse (1.919070911979029cm and 1.0624198327160437cm);
	\draw [rotate around={-14.399604085272227:(-6.523522008206119,2.2579207475686203)},line width=0.8pt,dash pattern=on 2pt off 2pt] (-6.523522008206119,2.2579207475686203) ellipse (1.8922949304677805cm and 1.0457473852434787cm);
	\draw [rotate around={66.75709721404749:(-5.811957187430205,1.058526037333517)},line width=0.8pt,dash pattern=on 2pt off 2pt] (-5.811957187430205,1.058526037333517) ellipse (1.842516420798575cm and 1.08437568659382cm);
	\draw [line width=0.8pt] (-4.53973895053123,0.25132550081830646) circle (1.9075556995710221cm);
	\draw [line width=0.8pt,color=blue] (-5.4346786757980965,1.9359179248500535) circle (0.6986155426222715cm);
	\begin{scriptsize}
	\draw[color=black] (-7.349035421117334,4.7) node {$V'$};
	\draw[color=black] (-6.3,-5) node {$X_1$};
	\draw [fill=blue] (-3.592479786071121,-6.043203687825456) circle (3pt);
		\draw [color=black] (8,1.5) circle (1pt);
				\draw [color=black] (7.8,1) circle (1pt);
						\draw [color=black] (7.6,0.6) circle (1pt);
	\draw[color=black] (-3,-6.706080013579287) node {$X_2$};
	\draw[color=black] (-4.758622109847833,-3.8476929114888083) node {$\spanset{X_1,X_2}$};
	\draw[color=black] (3.8522690351997495,-2.8) node {$M_3$};
	\draw[color=black] (6.067954525051544,0) node {$M_4$};
	\draw[color=black] (5.8,4) node {$M_n$};
	\draw[color=black] (-5.8,4.2) node {$W_2$};
	\draw[color=black] (-3,3) node {$W_3$};
	\draw[color=black] (-7.241845904788941,3.61) node {$W_1$};
	\draw[color=black] (-7.56341445377412,0.6006720161394983) node {$W_s$};
	\draw[color=black] (-4.5,-2) node {$W_0$};
	\draw[color=black] (-5.52681364353465,2.3) node {$W$};
	\end{scriptsize}
	\end{tikzpicture}
	\caption{The $(k;k-t,k-t+1)$-SPIDs described in Class III.}
	\label{fig:example3}
\end{figure}
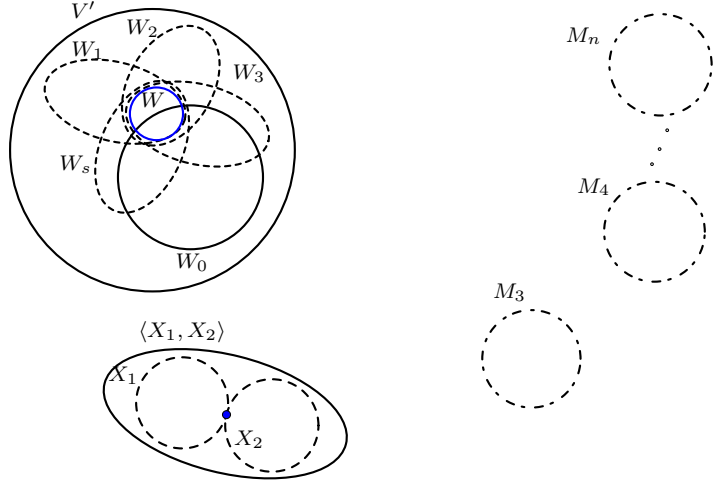

Examples in this class may contain $(k-t)$-sunflowers not of maximal dimension and $(k-t+1)$-sunflowers of maximal dimension. 

\bigskip

\noindent {\bf $\bullet$ \, Class IV}
\vspace{0.2cm}

\noindent Choose integers $n\geq 3$, $2\leq s<n$ and $k,t$ such that $2 \leq t \leq k-1$. Let $\mathbb{V}$ be a vector space over a field $\mathbb{F}$ which is either infinite or else a finite field $\mathbb{F}$ of order $q$ with $q$ a prime power such that $\frac{q^{k-t+2}-1}{q-1} \geq s+2$. Let $V'$, $M_1,\ldots,M_n$ be linearly independent subspaces of $\mathbb{V}$ such that $\dim V'=k-t+2$, and $\dim M_i=t-1$, for $i=1,\ldots,n$.\\
Let $V_0,W_0,W_1,\ldots,W_s$ be $s+2$  $(k-t+1)$-spaces in $V'$ such that they do not go through the same $(k-t)$-space, with $W_1,\ldots,W_s$ distinct (Figure \ref{fig:exemple4}) (which in the case $\mathbb{V}$ is a vector space over a finite field of order $q$, exist for the above assumption on $q$). We define the sets

$$\pi_1  = \spanset{V_0,M_1}, \, \pi_2=\spanset{W_0,M_2},$$ 
$$\pi_{3}=\spanset{W_1,M_{3}}, \ldots,\, \pi_{m_1}=\spanset{W_1,M_{m_1}},$$ 
$$\pi_{m_1+1}=\spanset{W_2,M_{m_1+1}}, \ldots,\,  \pi_{m_2}=\spanset{W_2,M_{m_2}},$$
$$ \ldots$$
$$\pi_{m_{s-1}+1}=\spanset{W_s,M_{m_{s-1}+1}},\ldots,\, \pi_{n}=\spanset{W_s,M_n}.$$

Clearly, the set $\cS$ is a $\{k;k-t,k-t+1\}$-SPID such that it is not a $(k-t)$-junta and
\begin{equation*}
\spanset{\cS}=\spanset{\pi_1,\ldots,\pi_n}=\spanset{V',M_1,M_2,M_3,\ldots,M_{n}}.
\end{equation*}
Since $V'$, $M_1,M_2,M_3,\ldots,M_n$ are linearly independent, we find that
\begin{align*}
\dim\spanset{\cS}& = k-t+2 +n\cdot(t-1) \\
& = k+ (n-1)(t-1)+1,
\end{align*}
and $\delta'(\cS)=(k,t,t-1, \ldots, t-1).$

\newpage
 
 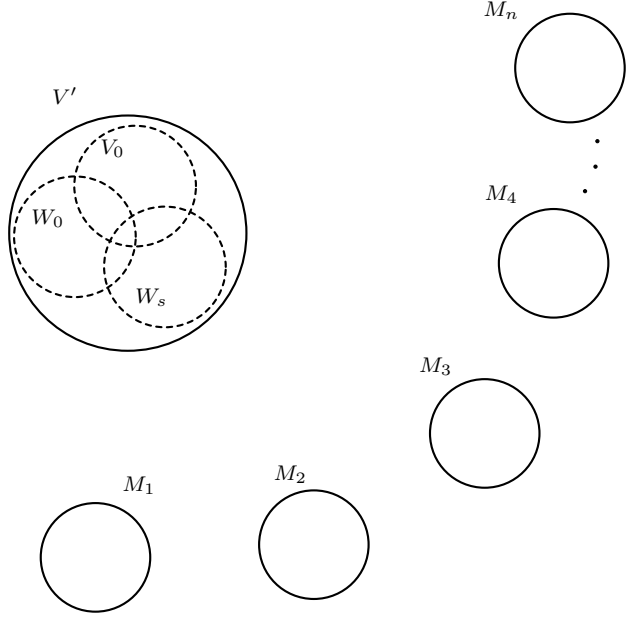
\begin{figure}[h!]
 	\centering
 	\begin{tikzpicture}[line cap=round,line join=round,>=triangle 45,x=1cm,y=1cm, scale=0.4]
 	\clip(-17.013897211351335,-9.742201126884343) rectangle (26.41558315826043,11.740499345571877);
 	\draw [line width=0.8pt] (-4,3) circle (3.8997948664000264cm);
 	\draw [line width=0.8pt, ,dash pattern=on 2pt off 2pt] (-3.76,4.56) circle (2cm);
 	\draw [line width=0.8pt, ,dash pattern=on 2pt off 2pt] (-2.78,1.88) circle (2cm);
 	\draw [line width=0.8pt, ,dash pattern=on 2pt off 2pt] (-5.74,2.88) circle (2cm);
 	\draw [line width=0.8pt] (2.1167356963309496,-7.327884763059891) circle (1.8cm);
 	\draw [line width=0.8pt] (7.735254928017719,-3.640042081949059) circle (1.8cm);
 	\draw [line width=0.8pt] (-5.063143959038027,-7.748724101822344) circle (1.8cm);
 	\draw [line width=0.8pt] (10,2) circle (1.8cm);
 	\draw [line width=0.8pt] (10.53717839301095,8.473581330058778) circle (1.8cm);
 	\begin{scriptsize}
 	\draw[color=black] (-6.069077424486191,7.565109929074949) node {$V'$};
 	\draw[color=black] (-4.5,5.8) node {$V_0$};
 	\draw[color=black] (-3.2239339085767185,0.874332869683477) node {$W_s$};
 	\draw[color=black] (-6.631853064995758,3.5) node {$W_0$};
 	\draw[color=black] (1.3408018422230954,-5) node {$M_2$};
 	\draw[color=black] (6.124394786554407,-1.4) node {$M_3$};
 	\draw[color=black] (-3.6303829822780718,-5.347464489283359) node {$M_1$};
 	\draw[color=black] (8.281701408507745,4.282252026102498) node {$M_4$};
 	\draw[color=black] (8.281701408507745,10.378988131622764) node {$M_n$};
 	\draw [fill=black] (11.03304898433229,4.391680622868242) circle (1.5pt);
 	
 	\draw [fill=black] (11.376967431310357,5.2045787702709445) circle (1.5pt);
 	
 	\draw [fill=black] (11.439498058033642,6.048742231035289) circle (1.5pt);
 	
 	\end{scriptsize}
 	\end{tikzpicture}
 	\caption{The $(k;k-t,k-t+1)$-SPIDs described in Class IV.}
 	\label{fig:exemple4}
 \end{figure}

Examples in this last class may contain $(k-t+1)$-sunflowers of maximal dimension and $(k-t)$-sunflowers not of maximal dimension.

\section{Tightness of the junta-property bound for $(k;k-t,k-t+1)$-SPIDs}

We will prove the following classification result.

\begin{theorem}
Let $\cS$ be a $(k;k-t,k-t+1)$-SPID in a vector space $\V$, with $| \cS |= n \geq 3$ and $2 \leq  t \leq k-1$. If the dimension of $\spanset{\cS}$ is  $k+(n-1)(t-1)+1$, then $\cS$ is either a $(k-t)$-junta or $\cS$ is one of the examples described in Classes I, II, III or IV.
\end{theorem}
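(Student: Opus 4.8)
The plan is to leverage Proposition \ref{turningspaces} and Lemma \ref{3petals} to reduce the classification into two mutually exclusive cases, according to whether $\cS$ contains a $(k-t)$-sunflower of maximal dimension with at least three petals. First I would invoke Proposition \ref{sorting} to fix a non-increasing array $\delta'(\cS)$, and observe that the hypothesis $\dim\spanset{\cS}=k+(n-1)(t-1)+1$ together with $\delta_1=k$ forces $\sum_{i=2}^n\delta_i=(n-1)(t-1)+1$. Since $t_1=t$ and $t_2=t-1$ are consecutive, the refined counting of Remark \ref{lower_bound} (Inequality~(\ref{lower_bound_counting})) shows that no $\delta_s$ can drop to $t_2=t-1$ \emph{below} the sunflower range in a way that saves more than one dimension; combined with $\delta_2\le t$ this pins down the shape of the array severely.

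If there \emph{is} a $(k-t)$-sunflower of maximal dimension with at least three petals, then Lemma \ref{3petals} applies verbatim and concludes that $\cS$ belongs to Class I; so this case is already settled by an earlier result. The substance of the theorem is therefore the complementary case: \emph{no} such sunflower exists. Here Proposition \ref{turningspaces} gives $\delta'(\cS)=(k,t,t-1,\ldots,t-1)$ and, crucially, that any relabelling fixing $\pi_1,\pi_2$ preserves this array, so that $\dim(\pi_j\cap\spanset{\pi_1,\pi_2})=k-t+1$ for every $j\ge 3$ and more generally $\dim(\pi_j\cap\spanset{\pi_h\mid h\in I})=k-t+1$ whenever $\{1,2\}\subseteq I$, $j\notin I$. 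This rigidity is the engine of the argument: it says each new petal meets the span of the previously chosen ones in exactly one dimension more than the generic center, so the ``extra'' directions accumulate in a controlled way.

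From this I would set $V_0=\pi_1\cap\pi_2$ (a $(k-t)$-space) and analyse how the remaining petals attach. The key structural dichotomy is whether the petals $\pi_3,\ldots,\pi_n$ all share a common $(k-t)$-space (the center-like behaviour), or whether the $(k-t+1)$-spaces $\pi_j\cap\spanset{\pi_1,\pi_2}$ vary. I expect to show that $\pi_1,\pi_2$ span a $(2t-1)$- or $(2t-2)$-dimensional ``head'' $\langle X_1,X_2\rangle$ inside a common larger space $V'$ (of dimension $k-t+1$ or $k-t+2$), and that each subsequent petal is of the form $\spanset{W_i,M_j}$ with $M_j$ a $(t-1)$-space linearly independent from everything seen before — exactly the templates of Classes II, III and IV. Separating these three subclasses amounts to tracking the configuration of the $(k-t+1)$-subspaces $W_0,W_1,\ldots,W_s$ of $V'$: whether they pass through a common $(k-t)$-space $W$ (Class II/III) or not (Class IV), and whether $\dim V'=k-t+1$ (Class II) or $k-t+2$ (Class III/IV). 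The bookkeeping on which pairs intersect in dimension $k-t$ versus $k-t+1$ will, via the fixed array, translate directly into these incidence conditions among the $W_i$'s.

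The main obstacle I anticipate is the final separation into Classes II, III, IV: the array-rigidity from Proposition \ref{turningspaces} constrains intersections of each petal with the \emph{span} of others, but recovering the precise pairwise intersection pattern — and hence the common-$(k-t)$-space conditions distinguishing the three classes — requires carefully choosing an order in which to add petals and repeatedly exploiting that reorderings fixing $\pi_1,\pi_2$ do not disturb $\delta'$. In particular, one must rule out ``mixed'' configurations by showing that any violation of the Class~II/III/IV incidence relations would either create a three-petal $(k-t)$-sunflower of maximal dimension (contradicting the standing case hypothesis) or force some $\delta_j\le t-2$ (contradicting the established array), and this contradiction-hunting, case by case on the positions of the $W_i$, is where the real work lies.
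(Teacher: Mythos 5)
Your proposal is correct and follows essentially the same route as the paper: assume $\cS$ is not a $(k-t)$-junta, split on whether $\cS$ contains a $(k-t)$-sunflower of maximal dimension with at least three petals, settle the affirmative case by Lemma \ref{3petals} (Class I), and settle the negative case starting from the rigidity in Proposition \ref{turningspaces}; the inline analysis you sketch for that negative case (grouping the $\pi_j$, $j \ge 3$, by their $(k-t+1)$-dimensional intersections with $\spanset{\pi_1,\pi_2}$, analysing the configuration of the resulting spaces $W_i$ inside a $(k-t+1)$- or $(k-t+2)$-dimensional space $V'$, and deriving contradictions from either a forbidden three-petal sunflower or some $\delta_j \le t-2$) is precisely the content and proof of the paper's Lemma \ref{no3petals}, which the paper states and proves separately and then cites. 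The only inessential blemish is your appeal to Remark \ref{lower_bound}: when the two intersection dimensions are consecutive, Inequality (\ref{lower_bound_counting}) collapses to exactly the hypothesized dimension $k+(n-1)(t-1)+1$ and excludes nothing, and the arrays are instead pinned down by the equality case of Theorem \ref{initial} together with Proposition \ref{turningspaces}.
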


First, we state the following lemma.

\begin{lemma} \label{no3petals}
Let $\cS$ be a $(k;k-t,k-t+1)$-SPID ($2 \leq  t \leq k-1$) in a vector space $\mathbb{V}$ such that $n=| \cS | \geq 3$ and $\cS$ is not a $(k-t)$-junta.\\
If $\dim\spanset{\cS}=k+(n-1)(t-1)+1$ and there is not a $(k-t)$-sunflower of maximal dimension with at least three petals in $\cS$, then $\cS$ is equivalent to one of the examples described in Classes $II$, $III$, or $IV$.
\end{lemma}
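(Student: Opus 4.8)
The plan is to first fix the arithmetic of $\cS$ and then reconstruct its geometry by measuring how far $\cS$ is from being a $(k-t)$-junta. Since by hypothesis no $(k-t)$-sunflower of maximal dimension with at least three petals lies in $\cS$, Proposition \ref{turningspaces} applies and yields
\[
\delta'(\cS)=(k,t,t-1,\ldots,t-1),
\]
together with the decisive fact that every permutation fixing $\pi_1$ and $\pi_2$ leaves this array unchanged. Thus $\dim(\pi_1\cap\pi_2)=k-t$ and, for each $j\ge 3$, the trace $R_j:=\pi_j\cap\spanset{\pi_1,\pi_2}$ is exactly a $(k-t+1)$-space; more precisely $\pi_j$ meets the span of $\pi_1,\pi_2$ together with any subfamily of the remaining spaces in a $(k-t+1)$-space. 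This symmetry lets me treat $\{\pi_3,\ldots,\pi_n\}$ as an unordered ``tail'' and will be used repeatedly.

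Next I would introduce the intrinsic quantity $\nu=\#\{\,i : \cS\setminus\{\pi_i\}\text{ is a }(k-t)\text{-junta}\,\}$ and split the classification according to $\nu\in\{2,1,0\}$. A preliminary step bounds $\nu\le 2$: if $\cS\setminus\{\pi_a\}$ and $\cS\setminus\{\pi_b\}$ are $(k-t)$-juntas through $Z_a$ and $Z_b$, then $Z_a\ne Z_b$ (otherwise all $\pi_i$ share $Z_a$ and $\cS$ is a junta) and every remaining $\pi_c$ contains $\spanset{Z_a,Z_b}$, whose dimension is therefore forced to be $k-t+1$ (two such $\pi_c$ cannot meet in dimension $\ge k-t+2$); a third removable index would make all $\pi_i$ pass through this common $(k-t+1)$-space and collapse the two intersection values into one, a contradiction. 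When $\nu=2$ the same analysis shows that the complement of the two exceptional spaces is a $(k-t+1)$-sunflower of maximal dimension with center $W:=\spanset{Z_a,Z_b}$, and that the exceptional spaces, which may be written $\spanset{Z_b,X_1}$ and $\spanset{Z_a,X_2}$, meet each other in a $(k-t)$-space; extracting $\dim\spanset{X_1,X_2}=2t-1$ from $\dim(\pi_a\cap\pi_b)=k-t$ reproduces Class II.

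For $\nu=1$, say $\cS\setminus\{\pi_2\}$ is a junta through a $(k-t)$-space $W$ with $\pi_2\not\supseteq W$. Quotienting by $W$ turns $\pi_1,\pi_3,\ldots,\pi_n$ into $t$-spaces meeting pairwise in dimension $0$ or $1$; a dimension count against the extremal value of $\dim\spanset{\cS}$ forces the tail to decompose into $(k-t+1)$-sunflowers whose centers $W_1,\ldots,W_s$ all contain $W$ and span a common $(k-t+2)$-space $V'$, with $\pi_1=\spanset{W,X_1}$ anchored at $W$. Reattaching $\pi_2$, whose trace on $V'$ is a $(k-t+1)$-space $W_0\not\supseteq W$ and whose cap is $(t-1)$-dimensional, gives exactly Class III. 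Finally $\nu=0$ is the residual, generic case: no deletion exposes a junta, so there is no axis to quotient by, and I would have to produce the base space directly, showing that the tail still decomposes into sunflowers with centers $W_a$ and that these, together with the traces of $\pi_1$ and $\pi_2$, all lie in one $(k-t+2)$-space $V'$ while sharing no common $(k-t)$-axis, which is Class IV. In every case I would close by naming the linearly independent pieces of the stated normal form and verifying the pairwise intersection dimensions and the equality $\dim\spanset{\cS}=k+(n-1)(t-1)+1$.

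The main obstacle is the confinement step underlying the cases $\nu\le 1$: proving that all the sunflower centers of the tail and the traces of $\pi_1,\pi_2$ are squeezed into a single space of dimension at most $k-t+2$. This is where the extremal hypothesis $\dim\spanset{\cS}=k+(n-1)(t-1)+1$ has to be spent, through iterated Grassmann estimates that leave no room for an extra independent center direction, combined with the permutation symmetry of Proposition \ref{turningspaces} to reduce to configurations involving only $\pi_1,\pi_2$ and two or three tail spaces. The case $\nu=0$ is the most delicate, since the base $V'$ and the sunflower decomposition must be extracted simultaneously without a junta to quotient by; keeping it disjoint from $\nu=1$ amounts to checking that the centers fail to pass through a common $(k-t)$-axis exactly when no single space can be deleted to expose a $(k-t)$-junta.
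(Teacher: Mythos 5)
Your opening is fine and agrees with the paper: Proposition \ref{turningspaces} yields $\delta'(\cS)=(k,t,t-1,\ldots,t-1)$ with its permutation-invariance, whence every $\pi_j$, $j\geq 3$, meets $\spanset{\pi_1,\pi_2}$ in a $(k-t+1)$-space. The problem is the engine of your classification: the trichotomy by $\nu=\#\{i:\cS\setminus\{\pi_i\}\ \text{is a}\ (k-t)\text{-junta}\}$ with $\nu=2,1,0$ corresponding to Classes II, III, IV is false, and the proof breaks at the $\nu=1$ step. Class IV only forbids \emph{all} of $V_0,W_0,W_1,\ldots,W_s$ from sharing a common $(k-t)$-space; it is perfectly allowed that $W_0,W_1,\ldots,W_s$ share a $(k-t)$-space $W$ while $V_0\not\supseteq W$. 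Concretely, with $k-t=2$ and $V'$ of dimension $4$ spanned by $e_1,e_2,e_3,e_4$, take $W_1=\spanset{e_1,e_2,e_3}$, $W_2=\spanset{e_1,e_2,e_4}$, $W_0=\spanset{e_1,e_2,e_3+e_4}$, $V_0=\spanset{e_1,e_3,e_4}$: the four traces have common intersection $\spanset{e_1}$ only, so this is a legitimate Class IV example (hence it satisfies every hypothesis of Lemma \ref{no3petals}), yet deleting $\pi_1=\spanset{V_0,M_1}$ leaves a $(k-t)$-junta with center $W=\spanset{e_1,e_2}$, so $\nu=1$ (and one checks no other deletion gives a junta). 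For such an example your $\nu=1$ analysis asserts that the surviving junta must contain a member ``anchored at $W$'', i.e.\ with trace exactly $W$ and a $t$-dimensional cap meeting the cap of the removed space in a line --- but here every member has a $(k-t+1)$-dimensional trace on $V'$ and all caps $M_i$ are linearly independent. So ``$\nu=1$ gives exactly Class III'' is wrong: $\nu=1$ can give Class IV, and the normal form you posit in that case need not exist. (A smaller defect: for $n=3$ every deletion trivially leaves two spaces through a common $(k-t)$-space, so $\nu=3$ and the trichotomy does not even start.)

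The invariant that actually separates the classes is not whether some deletion exposes a junta, but the geometry of the traces on $\spanset{\pi_1,\pi_2}$. The paper partitions $\{\pi_3,\ldots,\pi_n\}$ by the relation $\pi_i\sim\pi_j\Leftrightarrow\pi_i\cap\spanset{\pi_1,\pi_2}=\pi_j\cap\spanset{\pi_1,\pi_2}$, obtaining $s$ distinct $(k-t+1)$-spaces $W_1,\ldots,W_s$ pairwise meeting in $(k-t)$-spaces. The case $s=1$ gives Class II (this is essentially your $\nu=2$ analysis, which is the one salvageable part); if $s\geq 2$, the no-three-petal hypothesis plus Grassmann forces $W_h\subseteq\spanset{W_1,W_2}$, so all $W_h$ lie in a single $(k-t+2)$-space $V'$, and the split between Classes III and IV is then made according to the pair $\bigl(\dim(\pi_1\cap V'),\dim(\pi_2\cap V')\bigr)$: dimensions $(k-t,k-t+1)$ give Class III, dimensions $(k-t+1,k-t+1)$ give Class IV, and $(k-t,k-t)$ is shown to contradict $s\geq 2$. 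Any repair of your argument must replace the removability count $\nu$ by this trace-dimension criterion in the two tail cases; $\nu$ simply cannot distinguish Class III from Class IV.
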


\begin{proof}
By Propositions \ref{sorting} and  \ref{turningspaces}, we may sort $k$-subspaces in $\cS$ in such a way that 
\begin{equation}\label{eq:parametersnosunflower}
    (\delta_1,\delta_2, \ldots,\delta_n)=(k,t,t-1,\ldots,t-1).
\end{equation}
Also, arguing as in the proof of Proposition $\ref{turningspaces}$, we get that $\dim(\pi_4 \cap \spanset{\pi_1,\pi_2,\pi_3})=\dim(\pi_4 \cap \spanset{\pi_1,\pi_2})=k-t+1$, and hence
\begin{equation*}
    \pi_4 \cap \pi_3 \subseteq \pi_4 \cap \spanset{\pi_1,\pi_2,\pi_3}=\pi_4 \cap \spanset{   \pi_1,\pi_2} \subseteq \spanset{\pi_1,\pi_2}.
\end{equation*}
Eventually rearranging the spaces $\pi_3,\ldots,\pi_n$ in $\cS$, we can repeat the previous argument, getting
\begin{equation*}
    \pi_i \cap \pi_j \subseteq \spanset{\pi_1,\pi_2},
\end{equation*}
for all distinct $\pi_i$ and $\pi_j$, with $i,j \in \{3,\ldots,n\}$. Moreover, $\dim(\pi_i \cap  \spanset{\pi_1,\pi_2})=\dim(\pi_j \cap \spanset{\pi_1, \pi_2})=k-t+1$.

Now, define in  $\cS'=\{\pi_3,\ldots,\pi_n\}$, the following binary relation
\begin{equation*}
\pi_i \sim \pi_j \Longleftrightarrow \pi_i \cap \spanset{\pi_1,\pi_2}= \pi_j \cap \spanset{\pi_1,\pi_2},
\end{equation*}
for $i,j=3,\ldots,n$.\\
Clearly, $\sim$ is an equivalence relation on $\cS'$. The $k$-spaces of an equivalence class meet  $\spanset{\pi_1,\pi_2}$ in the same $(k-t+1)$-space. In this way, we get $s$ distinct $(k-t+1)$-dimensional spaces in $\spanset{\pi_1,\pi_2}$, say $W_1,\ldots,W_s$, where $s$ is a given integer, $1 \leq s \leq n-2$, pairwise intersecting in a $(k-t)$-space. Indeed, let $\pi_i$ and $\pi_j$ be $k$-spaces of $\cS'$ in different equivalence classes. Then, by the proof of Proposition \ref{turningspaces}, $\dim(\pi_i \cap  \spanset{\pi_1,\pi_2})=\dim(\pi_j \cap \spanset{\pi_1, \pi_2})=k-t+1$,
$$\pi_i \cap  \spanset{\pi_1,\pi_2}=W_\ell \,\,\textnormal{and}\,\,
\pi_j \cap  \spanset{\pi_1,\pi_2} =W_m,$$
for some distinct $\ell,m \in \{1, \ldots,s\}$. Hence,
$$\pi_i \cap \pi_j= \pi_i \cap \pi_j \cap  \spanset{\pi_1,\pi_2}=W_\ell \cap W_m.$$
Since $k-t \leq \dim(\pi_i \cap \pi_j)=\dim(W_\ell \cap W_m)$ and $W_\ell,W_m$ are distinct $(k-t+1)$-subspaces,
$$\dim(W_\ell \cap W_m)=k-t.$$

Now, since the relation $\sim$ induces a partition $J_1,J_2,\ldots,J_s$ on the elements of the index set $\{3,4,\ldots,n\}$, by Proposition \ref{turningspaces}, we can label appropriately  the elements of $\cS$, obtaining
\begin{equation*}
\begin{split}
    \pi_{j_1} & =\spanset{W_1,M_{j_1}} \hspace{2cm} \textnormal{with} \,\, j_1 \in J_1, \\
    & \hspace{-0.5cm} \pi_{j_2}=\spanset{W_2,M_{j_2}}  \hspace{2cm} \textnormal{with}  \,\, j_2 \in J_2, \\
   &  \ldots \\
   & \hspace{-0.5cm} \pi_{j_s}=\spanset{W_s,M_{j_s}} \hspace{2cm} \textnormal{with}  \,\, j_s \in J_s,
\end{split}
\end{equation*}
where the elements in the set $\{M_{j_{h}} : j_{h}\in J_h,\,\,h\in \{1,2,\ldots,s\}\}$, are certain linearly independent $(t-1)$-spaces in $\mathbb{V}$. 

We divide the remainder of the proof in two steps:
\begin{itemize}
    \item [1)] First, we look at the case where all elements in $\cS'$ meet $\spanset{\pi_1,\pi_2}$ in the same $(k-t+1)$-space, say $W$. It is clear that for $3 \leq j \leq n$ and $i=1,2$, $\pi_i \cap \pi_j=\pi_i \cap W$. Indeed, since $W = \pi_j \cap \spanset{\pi_1, \pi_2}$; we have
    	$$\pi_i \cap \pi_j \subseteq W.$$
    	Hence, $\pi_i \cap \pi_j \subseteq W \cap \pi_i$. On the other hand, since 
    $$W = \pi_j \cap \spanset{\pi_i ,\pi_2} \subseteq \pi_j,$$
    then we also have $W \cap \pi_i \subseteq \pi_j \cap \pi_i$.
Next we show that
    \begin{equation*}
        \dim(\pi_i \cap \pi_j)=k-t,
    \end{equation*}
for $3 \leq j \leq n$ and $i=1,2$. To this aim, suppose that either the space $\pi_1$ or $\pi_2$ contains $W$ ($W \not \subseteq \pi_1 \cap \pi_2$, since $\dim(\pi_1 \cap \pi_2)=k-t$). For instance, let $\pi_1$ contain $W$. Then, $\pi_2 \cap W=\pi_1 \cap \pi_2$; in fact, we have that $\pi_1 \cap \pi_2 \supseteq \pi_2 \cap W = \pi_2 \cap \pi_j$ with $j \in \{3,...,n\}$. But then $\cS$ is a $(k-t)$-junta; a contradiction. 

Hence, $\pi_1 \cap W=W_1$ and $\pi_2 \cap W=W_2$ are $(k-t)$-spaces, and they are distinct otherwise $\cS$ is again a $(k-t)$-junta. Precisely, they are two hyperplanes of $W$. We denote by $W'$ the $(k-t-1)$-space of $W$ in which they meet, and choose a basis of $\mathbb{V}$ in such a way that the following happens
    \begin{equation*}
        \pi_1 \cap W=\spanset{W',a_1} \,\,\, \textnormal{and} \,\,\, \pi_2 \cap W=\spanset{W',a_2},
    \end{equation*}
    with $a_1,a_2$ distinct $1$-spaces in $W_1 \setminus W'$ and $W_2 \setminus W'$, with $W', a_1,a_2$ linearly independent. Then, there also exist two $t$-spaces $X_1$ and $X_2$, having a $1$-space in common and such that
    \begin{equation*}
        \pi_1=\spanset{W',a_1,X_1}, \,\,\,\,\,\,\, \pi_2=\spanset{W',a_2,X_2}.
    \end{equation*}
This finally means that $\cS$ is one of the examples in Class II.

    \item[2)] Now, we suppose that $s \geq 2$. In this case, $W_1, \ldots, W_s$ are $(k-t+1)$-spaces pairwise intersecting in a $(k-t)$-space. Hence, by \cite[Section 9.3]{brouwer}, either 
    \begin{itemize}
        \item [$(a)$] they have a $(k-t)$-space in common, or
        \item[$(b)$] they lie in a $(k-t+2)$-space $V'$.
    \end{itemize} 

Note explicitly that for $s=2$, $(a)$ and $(b)$ are equivalent.
If  $s \geq 3$, we will show that 
\begin{equation}\label{inside1}
    \dim\spanset{W_1,W_2,\ldots,W_s}=k-t+2,
\end{equation}
which is equivalent to prove that, for all $1\leq h \leq s,$
 \begin{equation}\label{inside2}
     W_h \subseteq \spanset{W_1,W_2}.
 \end{equation}
 
Suppose that $W_1,W_2,\ldots,W_s$ go through a $(k-t)$-space in $\spanset{\pi_1,\pi_2}$ and let $\pi_{j_1}$,  $ \pi_{j_2},\pi_{j_h}$ be $k$-spaces  belonging to different equivalence classes with respect to $\sim$, such that
\begin{equation*}
\pi_{j_1}  =\spanset{W_1,M_{j_1}}  \hspace{0.5cm}
 \pi_{j_2}=\spanset{W_2,M_{j_2}}
 \hspace{0.5cm}
 \pi_{j_h}=\spanset{W_h,M_{j_h}}.
\end{equation*}

Since there is not a sunflower of maximal dimension with at least three petals contained in $\cS$, we have 
\begin{equation*}
    \dim(\pi_{j_h} \cap \spanset{\pi_{j_1},\pi_{j_2}})\geq k-t+1.
\end{equation*}
Then, by applying Grassmann's Formula, we obtain
\begin{equation*}
\begin{split}
   k-t+1  & \leq \dim(\pi_{j_h} \cap \spanset{\pi_{j_1},\pi_{j_2}})=2k+t -\dim\spanset{W_1,W_2,W_h,M_{j_1},M_{j_2},M_{j_h}} \\
   & = 2k+t- 3(t-1)- (\dim W_h + \dim \spanset{W_1,W_2} - \dim(W_h \cap \spanset{W_1,W_2}).
\end{split}
\end{equation*}
This implies $\dim(W_h \cap \spanset{W_1,W_2}) \geq k-t+1$ and hence we get property  $(\ref{inside2})$.\\
So, all $(k-t+1)$-spaces $W_1,\ldots,W_s$ lie in a $(k-t+2)$-space, say $V'$.\\
Obviously, $V'$ is contained in $\spanset{\pi_1,\pi_2}$ and
\begin{equation}\label{inequality1}
  \hspace{1cm}  k-t \leq \dim(\pi_i \cap V') \leq k-t+1, \hspace{2cm} \textnormal{for}\,\,\, i=1,2.
\end{equation}
Indeed, since for $i \in \{1,2\}$ and for any $j \in \{3,\ldots,n\}$,
	$$\pi_i \cap \pi_j= \pi_i \cap  \pi_j \cap \spanset{\pi_1,\pi_2}=\pi_i \cap W_h \subseteq \pi_i \cap V',$$
for some $h\in \{1,\ldots,s\}$, then the first inequality in (\ref{inequality1}) follows.\\
On the other hand, if $\dim(\pi_i \cap V') \geq k-t+2$, for $i=1$ or $2$, then $V'$ is contained either in $\pi_1$ or in $\pi_2$ (not in both since $\dim(\pi_1 \cap \pi_2)=k-t$). Without loss of generality, we can suppose that $V'$ is contained in $\pi_1$. Then  $$ \pi_2 \cap \pi_{j_h}=\pi_2 \cap W_h \subseteq \pi_2 \cap V' \subseteq \pi_1 \cap \pi_2,$$ 
for $h \in \{1,\ldots,s\}.$ This implies that $\pi_1 \cap \pi_2$ is contained in all elements of $\cS$ and hence it is a $(k-t)$-junta.

Furthermore, $\pi_1 \cap V'$ and $\pi_2 \cap V'$ are distinct subspaces. Indeed,
\begin{itemize}
\item [$(\diamond)$] if $\pi_1 \cap V'=\pi_2 \cap V'$ and it is a $(k-t+1)$-space, then $\pi_1 \cap \pi_2$ is a $(k-t+1)$-space, a contradiction;
\item[$(\diamond \diamond)$] if $\pi_1 \cap V'=\pi_2 \cap V'$ is a $(k-t)$-space, since for $i=1,2$ and $h=1,\ldots,s$, $\pi_i \cap W_h$ has dimension at least $k-t$ and $W_h \subseteq V'$, we have that
 $$\pi_1 \cap W_h= \pi_1 \cap V'=\pi_2 \cap V' =\pi_2 \cap W_h.$$
This implies that $\pi_1 \cap \pi_2$ is contained in all elements of $\cS$; again this is not the case.
\end{itemize} 

Now, let $W_h$ be a $(k-t+1)$-space with $1 \leq h \leq s $, then
\begin{equation}\label{inequality2}
\begin{split}
k-t = & \dim(\pi_1 \cap \pi_2) \geq \dim(\pi_1 \cap \pi_2 \cap W_h) \geq \\
&  \dim(\pi_1 \cap W_h)+ \dim(\pi_2 \cap W_h)-\dim W_h \geq 2(k-t)-k+t-1=k-t-1.  
\end{split}
\end{equation}
By taking into account Inequalities (\ref{inequality1}) and (\ref{inequality2}), 
the discussion may be reduced to one of the following three cases:
\begin{itemize}
    \item [$(i)$] $\dim(\pi_1 \cap V')=\dim(\pi_2 \cap V')=k-t $\,\, (and \,\,$\dim(\pi_1 \cap \pi_2 \cap V')=k-t-1$).
    \item [$(ii)$] $\pi_1$ and $\pi_2$ meet $V'$ in subspaces with different dimensions.
    \item [$(iii)$] $\pi_1 \cap V'$ and $\pi_2 \cap V'$ are two hyperplanes of $V'$.
\end{itemize}

Case $(i)$: We shall show that for all $3 \leq j \leq n$,
\begin{equation}\label{always_the_same_space}
    \pi_j \cap \spanset{\pi_1,\pi_2}= \spanset{\pi_1 \cap V', \pi_2 \cap V'}.
\end{equation}
Since $\pi_j \cap \spanset{\pi_1,\pi_2} \subseteq V'$, and $\pi_1$ and $\pi_2$ meet $V'$ in a $(k-t)$-space,

\begin{center}
 $\pi_j \cap \pi_1=\pi_1 \cap V'$ and $\pi_j \cap \pi_2=\pi_2 \cap V'$ ,  
\end{center}
obtaining that
\begin{equation*}
    \pi_j \cap \spanset{\pi_1,\pi_2} \supseteq \spanset{\pi_j \cap \pi_1, \pi_j \cap \pi_2}=\spanset{\pi_1 \cap V', \pi_2 \cap V'}.
\end{equation*}
However, since they both are $(k-t)$-spaces in $V'$, we obtain equality stated in (\ref{always_the_same_space}). Hence, every $\pi_j$, $j=3,\ldots,n$, meets $\spanset{\pi_{1},\pi_2}$ in the same $(k-t+1)$-subspace. But this contradicts $s \geq 2$.\\
\noindent Case $(ii)$:  We can suppose, without loss of generality, that 
\begin{center}
$\dim(\pi_1 \cap V')=k-t$ \,\,\, and \,\,\,  $ \dim(\pi_2 \cap V')=k-t+1$.
\end{center}
Clearly, $\pi_1 \cap V' \not \subseteq \pi_2 \cap V'$, otherwise 
$$\pi_1 \cap W_h =\pi_1 \cap V' \subseteq \pi_2 \cap V'.$$

This implies that $\pi_1 \cap \pi_2 \subseteq W_h$, for $h=1,\ldots,s$, and then $\cS$ is a $(k-t)$-junta with center $\pi_1 \cap \pi_2$.

Since $W=\pi_1 \cap \pi_2 \cap V'$ is a $(k-t-1)$-space, there exists a $t$-space $X_1$ contained in $\pi_1$ and a $(t-1)$-space $X_2$ contained in $\pi_2$ 
both disjoint from $V'$, for $i=1,2$, and such that $\spanset{X_1,X_2}=2t-2$. Then
$$\pi_1=\spanset{\pi_1 \cap V',X_1} \,\,\,\textnormal{and}\,\,\,\pi_2=\spanset{\pi_2\cap V',X_2}.$$
We note explicitly that
\begin{equation}\label{sunflower_remark}
\pi_1 \cap V' = \pi_1 \cap W_h \subseteq W_h,
\end{equation}
for $h\in \{1,\ldots,s\}$, since $\dim(\pi_1 \cap V')=k-t$.\\
Case $(iii)$: Now, we suppose that $\pi_1 \cap V'$ and $\pi_2 \cap V'$ are hyperplanes of $V'$, say $V_0$ and $W_0$, respectively. Then, there exists $X_i$, $i=1,2$, a $(t-1)$-space in $\pi_i$ disjoint from $V'$, such that 

$$\pi_1=\spanset{V_0,X_1} \,\,\,\textnormal{and}\,\,\,\pi_2=\spanset{W_0,X_2}.$$

Again, by Grassmann's Formula, we obtain that $X_1,X_2,V'$ are linearly independent and $\dim\spanset{X_1,X_2}=2t-2$.
\end{itemize}
So, the discussion in Case $(ii)$ provides us with an example described in Class III, while Case $(iii)$ gives an example in Class IV.
\end{proof}

\begin{remark} \textnormal{Let  $\mathcal{W}= \{W_1,\ldots,W_s,\pi_2 \cap V' \}$ be the set of $(k-t+1)$-spaces in $V'$ with $2 \leq s \leq n-3$.\\
In Case $(ii)$, if $s \geq 3$ by Formula (\ref{sunflower_remark}), the first $s$ subspaces in $\mathcal{W}$ form a sunflower with center $\pi_1 \cap V'$, and $\pi_2 \cap V'$ not through $\pi_1 \cap V'$.\\
In Case $(iii)$, considering $\pi_1 \cap V'$ and $\pi_2\cap V'$, one of them or both could be in $\{W_1,\ldots,W_s\}$.\\
If $s=2$, at most one of $\pi_1 \cap V'$ and $\pi_2\cap V'$ can coincide with $W_1$ or $W_2$. Otherwise, $W_1 \cap W_2=\pi_1 \cap \pi_2$ and it is contained in all elements of $\cS$.\\
In particular, if $s=2$ and $n=4$, it is straightforward to see that exactly one of $\pi_1 \cap V'$ and $\pi_2 \cap V'$ must necessarily be equal to $W_1$ or $W_2$.}
\end{remark}

We are now in the position to prove the main result of this section.

\medskip
\noindent {\bf Proof of Theorem 3.1}
\medskip

\noindent We assume that $\cS$ is not a $(k-t)$-junta and denote the elements of $\cS$ by $\pi_1,\pi_2, \ldots, \pi_n$. We will consider all possible orderings of the spaces in $\cS$ such that the parameters $(\delta_2,\ldots,\delta_n)$ are non-increasing.\\
Since $\dim\spanset{\cS}=k+(n-1)(t-1)+1$, we have the equality in (\ref{bound}) of Theorem \ref{initial}.\\ Hence, if $m \geq 3$, we have
\begin{equation} \label{case1}
    (\delta_2, \ldots, \delta_n)=(\underbrace{t, \ldots, t}_{m-1 \,\,\text{times}}, \underbrace{t-1, \ldots, t-1}_{n-m-1\,\, \text{times}}, t+1-m),
\end{equation}
otherwise $m=2$ and we have
\begin{equation}\label{case2}
    (\delta_2, \ldots, \delta_n)=(t, t-1, \ldots,t-1).
\end{equation}
\begin{itemize}
    \item [$\circ$] Suppose that we can find a permutation of the elements in $\mathcal{S}$ such that $\delta(\cS)$ is as in (\ref{case1}), for $m \geq 3$. Then, by Lemma \ref{3petals}, it follows that $\mathcal{S}$ belongs to Class I.

\item[$\circ$] If otherwise $(\delta_2, \ldots, \delta_n)$ is as in $(\ref{case2})$, by Proposition \ref{turningspaces}, there is no $(k-t)$-sunflower of maximal dimension with at least three petals. The result then follows by Lemma \ref{no3petals}. 
\end{itemize}
\vspace{-0.5cm}
\hspace{14.7cm}\qedsymbol
\\

\noindent The authors thank the referees for careful reading of the paper and for their valuable suggestions which improved the article.

\vspace{2cm}

\noindent Giovanni Longobardi,\\
Department of Management and Engineering,\\
University of Padua,\\
Stradella S. Nicola, 3, 36100 Vicenza, Italy,\\
\emph{giovanni.longobardi@unipd.it}

\bigskip
\noindent Leo Storme,\\
Department of Mathematics: Analysis, Logic and Discrete Mathematics,\\
Ghent University,\\
Krijgslaan 281, Building S8, 9000 Gent, Flanders, Belgium,\\
\emph{leo.storme@ugent.be}

\bigskip 
\noindent Rocco Trombetti,\\
Department of Mathematics and Applications “R. Caccioppoli”,\\
University of Naples ”Federico II”,\\
via Vicinale Cupa Cintia, I-80126 Napoli, Italy,\\
\emph{rtrombet@unina.it}


\begin{thebibliography}{10}
\bibitem{barrolleta-storme}
R.D. Barrolleta, E. Su\'{a}rez-Canedo, L. Storme, P. Vandendriessche, On primitive constant dimension codes and a geometrical sunflower bound, \textit{Adv. Math.  Commun.}, Volume 11, pp. 757-765, 2017.

\bibitem{beutelspacher}
A. Beutelspacher, J. Eisfeld, J. M\"uller, On sets of planes in $PG(d,q)$ intersecting mutually in one point, \textit{Geom. Dedicata}, 78, pp. 143-159, 1999.

\bibitem{brouwer}
A.E. Brouwer, A.M. Cohen, A. Neumaier. \textit{Distance-regular graphs}, Ergebnisse der Mathematik und ihrer Grenzgebiete 18. Springer-Verlag, 1989.

\bibitem{deboeck} M. De Boeck, The largest Erd{\H{o}}s-Ko-Rado sets of planes in finite
projective and finite classical polar spaces, \textit{Des. Codes Cryptogr.}, 72, pp. 77–117, 2014.

\bibitem{junta}
I. Dinur, E. Friedgut, Intersecting families are essentially
contained in juntas, \textit{ Combin. Probab. Comput.}, 18(1-2), pp. 107-122, 2009. 
\bibitem{eisfeld}
J. Eisfeld, On sets of $n$-dimensional subspaces of projective spaces intersecting mutually in an $(n-2)$-dimensional subspace, \textit{Discrete Math.}, 186, pp. 87-97, 2015.
\bibitem{Etion} T. Etzion, N. Raviv, Equidistant codes in the Grassmannian, \textit{Discrete Appl. Math.}, 186, pp. 87-97, 2015.
\bibitem{KK} R. Koetter, F. R. Kschischang, Coding for errors and erasures in random network coding, \textit{IEEE Trans. Inform. Theory}, 54, pp. 3579-3591,  2008.
\bibitem{subspacecodes} A. Khaleghi, D. Silva, F.R. Kschischang, Subspace Codes, \textit{Lecture Notes in Computer Science}, 5921, pp. 1-21, 2009.


\end{thebibliography}
\end{document}